\newtheorem{thm}{Theorem}[section]
\newtheorem{lem}[thm]{Lemma}
\theoremstyle{definition}
\theoremstyle{remark}
\newtheorem{question}{Question}
\newtheorem*{question1p}{Question~$1'$}
\newtheorem*{question2p}{Question~$2'$}
\newtheorem*{example}{Example}
\author{Sel\c{c}uk Kayacan  \thanks{I would like to thank to Roald Koudenburg for helpful discussions.}}
\title{Recovering information about a finite group\\
  from its subrack lattice}
\date{}
\begin{document}

\maketitle

\small

\begin{center}
  Mathematics Research Group,\\
  METU Northern Cyprus Campus,\\
  Kalkanl\i, G\"uzelyurt, Turkey\\
  {\it e-mail:} \href{mailto:skayacan@metu.edu.tr}{skayacan@metu.edu.tr}
\end{center}

\begin{abstract}
  We prove that the isomorphism type of the subrack lattice of a finite group determines the nilpotence class. We analyze the problem of estimating the orders of the group elements corresponding to the atoms of the subrack lattice. As a result, we show that the subrack lattice determines $p$-nilpotence of the group if a certain condition is met.
  
  \smallskip
  \noindent 2010 {\it Mathematics Subject Classification.} 20N99, 08A99.

  \smallskip
  \noindent Keywords: Subrack lattice of a group; nilpotence class; $p$-nilpotence

\end{abstract}

\section{Introduction}

What information can be obtained from a group if we forget about the group operation but preserve the conjugation operation? This question manifests itself naturally in the study of racks. A \emph{rack} $R$ is a set (possibly empty) together with a binary operation $\triangleright\colon R\times R\to R$ satisfying the following two axioms:
\begin{itemize}
\item[\textbf{(A1)}] for all $a,b,c\in R$ we have $a \triangleright (b \triangleright c) = (a \triangleright b) \triangleright (a \triangleright c)$ 
\item[\textbf{(A2)}] for all $a,c\in R$ there is a unique $b\in R$ such that $a \triangleright b = c$
\end{itemize}
If, additionally, $R$ satisfies the following third axiom
\begin{itemize}
\item[\textbf{(A3)}] for all $a\in R$ we have $a\triangleright a = a$
\end{itemize}
we say $R$ is a \emph{quandle}. Let $G$ be a finite group and for any $a,b\in G$ let $a\triangleright b := aba^{-1}$. The prime example of a rack in this paper is the group $G$ together with the conjugation operation $\triangleright$. In fact the group $G$ is a quandle, since axiom (A3) holds as well under the conjugation operation.

Racks appeared in literature under various names in different times. One of the oldest formalizations, as early as in 1950s, is due to Conway and Wraith in which they used the term ``wrack''. The choice of this term makes it apparent that those objects may arise as the ruins of a group. In \cite{FR92} Fenn and Rourke adopted this term by dropping the first letter ``w'' and their usage became prevalent.

In a recent work \cite{HSW19} Heckenberger, Shareshian, and Welker initiated the study of racks from the combined perspective of combinatorics and finite group theory. A \emph{subrack} of a rack $R$ is a subset of $R$ which is a rack under the operation inherited by $R$. The set of all subracks of $R$ form a partially ordered set (poset for short) under inclusion which is denoted by $\mathcal{R}(R)$. It is a simple fact that for a finite rack $R$ the partially ordered set $\mathcal{R}(R)$ is a lattice (see \cite[Lemma~2.1]{HSW19}). We define the \emph{subrack lattice} of the group $G$ as the partially ordered set $\mathcal{R}(G)$ with the maximum element $G$ and the minimum element $\emptyset$. In \cite[Question~5.2]{HSW19} the authors raised the following

\begin{question}\label{q1}
  Are there two groups $G$, $H$ which have isomorphic subrack lattices but are non-isomorphic as racks?
\end{question}

We shall remark that the rack structure on the group $G$ determines the group structure of the inner automorphism group $G/Z(G)$ and answering Question~\ref{q1} negatively would be a strong result. In particular, main theorems of this paper would become corollaries.

Let $w_G\colon \mathbb{N}\to \mathbb{Z}$ be the \emph{class size frequency function} taking a number $n$ to the the number of conjugacy classes of $G$ with $n$ elements. In \cite{CHM92} Cossey, Hawkes, and Mann proved that if $G$ is a nilpotent group, and if $H$ is a group with $w_G = w_H$, then $H$ is nilpotent. However, they were not able to decide whether the theorem holds if nilpotence is replaced by supersolvability. By \cite[Lemma~2.8]{HSW19} the subrack lattice $\mathcal{R}(G)$ determines the class size frequency function $w_G$. Hence, in view of subrack lattices, we may ask the following

\begin{question}\label{q2}
  If $G$ is a supersolvable group, and if $H$ is a group with $\mathcal{R}(G)\cong \mathcal{R}(H)$, can we conclude $H$ is supersolvable?
\end{question}

In \cite[Theorem~1.1]{HSW19} the authors answered this problem affirmatively by proving that the isomorphism type of the subrack lattice $\mathcal{R}(G)$ determines if $G$ is abelian, nilpotent, supersolvable, solvable, or simple.

In \cite{CHM92} the authors introduced some examples showing that the nilpotence class of the group $G$ is not determined by the class size frequency function $w_G$.

\begin{question}\label{q3}
  Is the nilpotence class of the group $G$ determined by the isomorphism type of the subrack lattice $\mathcal{R}(G)$?
\end{question}

In Section~\ref{sec:nil} we give a positive answer to this question (see Theorem~\ref{thm:nil}). We shall remark that character table also determines the nilpotence class. In \cite{Mat94} Mattarei constructed two non-isomorphic groups with identical character tables and different derived length. Interestingly the inner automorphism groups of those two groups are isomorphic by the construction.

Let $m_G\colon \mathbb{N}\to \mathbb{Z}$ be the \emph{character degree frequency function} taking a number $n$ to the number of irreducible characters of $G$ of degree $n$. In \cite{Isa86} Isaacs proved that if $G$ has a normal $p$-complement for some prime $p$, and if $H$ is a group with $m_G = m_H$, then $H$ has a normal $p$-complement. In view of subrack lattices we may ask the following

\begin{question}\label{q4}
  Is the $p$-nilpotence of the group $G$ determined by the isomorphism type of the subrack lattice $\mathcal{R}(G)$?  
\end{question}

In Section~\ref{sec:nor} we analyze the problem of estimating the orders of group elements using only the knowledge of subrack lattice of the group. As a result of our analysis we answer this question affirmatively when the subrack lattice satisfies a certain condition (see  Theorem~\ref{thm:pnil}).

\section{Preliminaries}
\label{sec:pre}

For a finite group $G$ there are two structures over $G$ that we consider. One is the usual multiplicative structure of the group and the other is the conjugation structure. If $S$ is a subset of $G$ we denote by $\langle S\rangle$ the subgroup of $G$ generated by $S$ whereas $\langle S\rangle_{\mathsf{rk}}$ stands for the subrack of $G$ generated by $S$. If $S$ and $T$ are two subsets of a finite rack $R$, we define
$$ S\triangleright T := \{ s\triangleright t\colon s\in S,\, t\in T \}. $$
As was remarked earlier the group $G$ is a quandle under the conjugation operation; hence, singletons are elements of the subrack lattice $\mathcal{R}(G)$ and the order of $G$ is the number of atoms in $\mathcal{R}(G)$. 

\begin{lem}\label{lem:subrack}
  For a finite group $G$ and a subset $S$ of $G$ the following statements are equivalent.
  \begin{enumerate}[(i)]
  \item The set $S$ is a subrack of $G$.
  \item The equality $S = S\triangleright S$ holds.
  \item The equality $S = \langle S \rangle \triangleright S$ holds.
  \item For any pair of elements $a,b\in S$, the subrack $\langle a,b \rangle_{\mathsf{rk}}$ is contained by $S$.
  \end{enumerate}
\end{lem}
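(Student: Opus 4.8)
The four conditions are all organized around a single principle: because $G$ is finite, one‑sided closure under conjugation already forces $S$ to be a subrack. So the plan is to prove $\mathrm{(i)}\Rightarrow\mathrm{(ii)}\Rightarrow\mathrm{(iii)}\Rightarrow\mathrm{(i)}$ and then $\mathrm{(i)}\Leftrightarrow\mathrm{(iv)}$ separately, the latter being essentially the universal property of the generated subrack.

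First I would isolate the engine of the argument. Suppose $S\triangleright S\subseteq S$. Then for each $a\in S$ the map $x\mapsto a\triangleright x$ sends $S$ into $S$; being injective on all of $G$, it is a bijection of the finite set $S$. Hence axiom (A2) holds inside $S$, so $S$ is a subrack, and moreover each $a\in S$ lies in the image of $x\mapsto a\triangleright x$, i.e.\ $a\in S\triangleright S$. Thus the condition $S\triangleright S\subseteq S$ is equivalent both to ``$S$ is a subrack'' and to $S=S\triangleright S$, which settles $\mathrm{(i)}\Leftrightarrow\mathrm{(ii)}$ and reduces $\mathrm{(iii)}$ to the set identity $S=\langle S\rangle\triangleright S$.

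For $\mathrm{(ii)}\Rightarrow\mathrm{(iii)}$ the inclusion $S\subseteq\langle S\rangle\triangleright S$ is immediate, taking the conjugating element to be the identity (which lies in $\langle S\rangle$), so the content is $\langle S\rangle\triangleright S\subseteq S$. I would write a general element of $\langle S\rangle$ as a word $s_1^{\varepsilon_1}\cdots s_k^{\varepsilon_k}$ with $s_i\in S$ and $\varepsilon_i=\pm1$, and prove $g\triangleright s\in S$ for all $s\in S$ by induction on $k$, using $(ab)\triangleright c=a\triangleright(b\triangleright c)$ to peel off the first letter. In the inductive step, with $s':=(s_2^{\varepsilon_2}\cdots s_k^{\varepsilon_k})\triangleright s\in S$ by hypothesis, one must show $s_1^{\varepsilon_1}\triangleright s'\in S$: for $\varepsilon_1=+1$ this is just $s_1\triangleright s'\in S\triangleright S=S$, while for $\varepsilon_1=-1$ one invokes surjectivity of $x\mapsto s_1\triangleright x$ on $S$ to find $t\in S$ with $s_1\triangleright t=s'$, which forces $s_1^{-1}\triangleright s'=t\in S$. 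Then $\mathrm{(iii)}\Rightarrow\mathrm{(i)}$ is trivial, as $S\triangleright S\subseteq\langle S\rangle\triangleright S=S$.

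Finally, $\mathrm{(i)}\Leftrightarrow\mathrm{(iv)}$: a subrack $S$ containing $a$ and $b$ contains the smallest subrack containing $\{a,b\}$, namely $\langle a,b\rangle_{\mathsf{rk}}$; conversely, if $\langle a,b\rangle_{\mathsf{rk}}\subseteq S$ whenever $a,b\in S$, then in particular $a\triangleright b\in\langle a,b\rangle_{\mathsf{rk}}\subseteq S$, so $S\triangleright S\subseteq S$ and we are back to $\mathrm{(i)}$. The only genuinely non‑formal point is the $\varepsilon_1=-1$ case of the induction in $\mathrm{(ii)}\Rightarrow\mathrm{(iii)}$, where inverses of generators are handled through surjectivity of $s_1\triangleright(\,\cdot\,)$ on $S$; this is also precisely where finiteness of $G$ is used and cannot be dispensed with.
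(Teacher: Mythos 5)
Your proof is correct and follows essentially the same route as the paper: the cycle (i)$\Rightarrow$(ii)$\Rightarrow$(iii)$\Rightarrow$(i) driven by the identity $(ab)\triangleright c=a\triangleright(b\triangleright c)$, plus the observation $a\triangleright b\in\langle a,b\rangle_{\mathsf{rk}}$ for (i)$\Leftrightarrow$(iv). The paper's version is much terser; you have simply supplied the details it leaves implicit, in particular the use of finiteness to handle inverses of generators (and to get $S\subseteq S\triangleright S$ from one-sided closure), which is exactly where the argument genuinely needs finiteness.
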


\begin{proof}
  Obviously (i) implies (ii) and (iii) implies (i). To show that (ii) implies (iii) observe that $a \triangleright (b \triangleright c) = (ab) \triangleright c$ for all $a,b,c\in G$. This shows the equivalence of the statements (i), (ii), and (iii). Since $a \triangleright b\in \langle a,b \rangle_{\mathsf{rk}}$, the equivalence of (i) and (iv) is clear.
\end{proof}

Notice that any subgroup of a group is a subrack by Lemma~\ref{lem:subrack}; hence, subgroup lattice is contained in the subrack lattice of the group. Consider the subposet $\mathcal{S}$ of $\mathcal{R}(G)$ whose elements are atoms of $\mathcal{R}(G)$ together with the subracks that are generated by pairs of elements of $G$. By the equivalence of the statements (i) and (iv) in Lemma~\ref{lem:subrack} we see that given $\mathcal{S}$ one can easily recover the rest of $\mathcal{R}(G)$. The following Lemma will be very useful throughout this paper.

\begin{lem}[see {\cite[Lemma~2.8]{HSW19}}]\label{lem:2.8}
  For a finite group $G$, the maximal subracks of $G$ are unions of all but one conjugacy classes of $G$.
\end{lem}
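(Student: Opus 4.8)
The plan is to prove the two inclusions separately: every union of all but one of the conjugacy classes of $G$ is a maximal subrack, and these are the only ones. For $g\in G$ write $g^G=\{xgx^{-1}:x\in G\}$ for its conjugacy class; recall that any union of conjugacy classes is a subrack by criterion (ii) of Lemma~\ref{lem:subrack}, and that subgroups are subracks.

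\emph{Step 1: $G\setminus C$ is a maximal subrack for every conjugacy class $C$.} It is a subrack (a union of conjugacy classes), so let $T$ be any subrack with $G\setminus C\subsetneq T$ and pick $x\in T\cap C$. Then $T$ contains $R:=\langle (G\setminus C)\cup\{x\}\rangle_{\mathsf{rk}}$. The subgroup $\langle (G\setminus C)\cup\{x\}\rangle$ cannot lie in a proper subgroup $K$, since that would force $G\setminus K\subseteq C\setminus\{x\}$, hence $|C|\ge|G|-|K|+1>|G|/2$, impossible for a conjugacy class (the case $C=\{1\}$ being trivial, as then $(G\setminus C)\cup\{x\}=G$). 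So $\langle R\rangle=G$, and Lemma~\ref{lem:subrack}(iii) applied to the subrack $R$ gives $R=\langle R\rangle\triangleright R=G\triangleright R\supseteq G\triangleright\{x\}=C$; together with $G\setminus C\subseteq R$ this yields $R=G$, hence $T=G$. Thus $G\setminus C$ is maximal.

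\emph{Step 2: every maximal subrack $M\ne G$ equals some $G\setminus C$.} Put $H=\langle M\rangle$. Since $H$ is a subrack containing $M$, maximality gives $M=H$ or $H=G$. If $H=G$, then Lemma~\ref{lem:subrack}(iii) gives $M=G\triangleright M=\bigcup_{m\in M}m^G$, a union of conjugacy classes, so $M$ omits some class $C$, whence $M\subseteq G\setminus C\subsetneq G$ and maximality forces $M=G\setminus C$. If $H\ne G$, then $M=H$ is a proper subgroup; for each $g\notin M$ the subgroup $\langle M,g\rangle$ properly contains $M$ and, being a subrack, cannot be a proper subgroup without contradicting the maximality of $M$, so $\langle M,g\rangle=G$, i.e.\ $M$ is a \emph{maximal subgroup}. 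Now $\langle M\cup\{g\}\rangle_{\mathsf{rk}}$ lies between $M\cup\{g\}$ and the subrack $\bigl(\bigcup_{m\in M}m^G\bigr)\cup g^G$ and equals $G$ by maximality; writing $U=\bigcup_{m\in M}m^G=\bigcup_{x\in G}xMx^{-1}$, this says $U\cup g^G=G$ for every $g\notin M$. By the classical fact that a finite group is never the union of the conjugates of a proper subgroup, $U\ne G$, so $\emptyset\ne G\setminus U\subseteq g^G$ for every $g\notin M$; choosing $g_0\in G\setminus U$ shows every element of $G\setminus M$ lies in the single class $C:=g_0^G$. Hence $G\setminus C\subseteq M$, and comparing orders ($|M|\le|G|/2$ since $M$ is a proper subgroup, while $|G\setminus C|\ge|G|/2$) gives $M=G\setminus C$.

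The delicate step is the case $\langle M\rangle\ne G$ in Step~2: one must observe that a maximal subrack generating a proper subgroup is automatically a maximal \emph{subgroup}, and then invoke the elementary counting result that a proper subgroup does not cover its overgroup under conjugation in order to pin $G\setminus M$ down to a single conjugacy class. Everything else reduces to the characterizations in Lemma~\ref{lem:subrack} together with the estimate $|C|\le|G|/2$ for a conjugacy class of a nontrivial element.
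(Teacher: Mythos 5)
Your proof is correct and complete; the paper itself gives no argument for this lemma (it is quoted from \cite[Lemma~2.8]{HSW19}), and your two-step argument --- complements of single classes are maximal via the generation/conjugation criterion of Lemma~\ref{lem:subrack}, and conversely a maximal subrack either generates $G$ (hence is a union of classes missing exactly one) or is a maximal subgroup whose complement is pinned to one class by the fact that no finite group is the union of conjugates of a proper subgroup --- is essentially the standard proof from that source. No gaps: the only delicate points (the bound $|C|\le |G|/2$ for the class of a nontrivial element, and the final order comparison in the maximal-subgroup case) are all handled correctly.
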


For a lattice $\mathcal{L}$ the subposet $\mathsf{Int}(\mathcal{L})$ of $\mathcal{L}$ consists of those elements that are the meet of a set of coatoms of $\mathcal{L}$. For an element $S$ of $\mathcal{L}$ we define the \emph{closure} $\overline{S}$ of $S$ as the meet of the coatoms of $\mathcal{L}$ that are covering $S$. We say $S$ is \emph{closed} if $\overline{S} = S$. Let $S$ and $T$ be some elements of $\mathcal{L}$. The \emph{interval} $[S,T]_{\mathcal{L}}$ is defined as the set $\{U\in\mathcal{L} \colon S\leq U \leq T\}$.

\begin{lem}[see {\cite[Lemma~3.2]{HSW19}}]\label{lem:3.2}
  For a finite group $G$ and an element $S$ of $\mathcal{R}(G)$ the following statements are equivalent.
  \begin{enumerate}[(i)]
  \item The subrack $S$ of $G$ is closed in $\mathcal{R}(G)$.
  \item The subrack $S$ lies in $\mathsf{Int}(\mathcal{R}(G))$.
  \item The subrack $S$ is a union of some of the conjugacy classes of $G$.
  \end{enumerate}
\end{lem}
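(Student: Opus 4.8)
The plan is to prove (i) $\Leftrightarrow$ (ii) by a purely order-theoretic argument valid in any finite lattice, and then to prove (ii) $\Leftrightarrow$ (iii) using Lemma~\ref{lem:2.8} as the only group-theoretic input. For the first equivalence: if $S$ is closed then $S=\overline S$ is by definition the meet of the coatoms lying above $S$, so $S\in\mathsf{Int}(\mathcal R(G))$; conversely, if $S=\bigwedge_{i}C_i$ with each $C_i$ a coatom, then every $C_i$ lies above $S$, whence $\overline S=\bigwedge\{C\ \text{a coatom}:C\ge S\}\le\bigwedge_i C_i=S$, and since $\overline S\ge S$ always, $\overline S=S$. (Reading an empty meet as the top element $G$ takes care of the case $S=G$.)

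For (ii) $\Leftrightarrow$ (iii) I would assemble three observations. First, by Lemma~\ref{lem:2.8} the coatoms of $\mathcal R(G)$ are exactly the sets $M_C:=G\setminus C$, one for each conjugacy class $C$ of $G$. Second, the set-theoretic intersection of two subracks is again a subrack (immediate from part~(iv) of Lemma~\ref{lem:subrack}), so meets in $\mathcal R(G)$ coincide with intersections; hence an element of $\mathsf{Int}(\mathcal R(G))$ is literally an intersection of sets of the form $M_C$. Third, since a conjugacy class is either contained in, or disjoint from, any union of conjugacy classes, a coatom $M_C$ contains a subrack $S$ that is a union of conjugacy classes if and only if $C\not\subseteq S$.

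Granting these, (ii) $\Rightarrow$ (iii) is a De Morgan computation: from $S=\bigcap_{i\in I}M_{C_i}$ we get $S=G\setminus\bigcup_{i\in I}C_i$, the union of the conjugacy classes not occurring among the $C_i$. For (iii) $\Rightarrow$ (ii), if $S$ is the union of a set $J$ of conjugacy classes, then by the third observation $S=\bigcap\{M_C:C\notin J\}=G\setminus\bigcup_{C\notin J}C$, which displays $S$ as a meet of coatoms, so $S\in\mathsf{Int}(\mathcal R(G))$. The degenerate cases $S=\emptyset$ (intersection over all classes) and $S=G$ (empty intersection, read as the top) fit these formulas.

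I do not expect a real obstacle once Lemma~\ref{lem:2.8} is in place: the statement then reduces to bookkeeping with complements of conjugacy classes. The only points needing a little care are verifying that meets in $\mathcal R(G)$ are genuinely intersections — so that ``meet of coatoms'' can be manipulated as a literal set intersection — and handling the empty-meet / whole-group boundary cases consistently.
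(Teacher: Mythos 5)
Your proof is correct. The paper itself offers no proof of this lemma---it is imported verbatim from \cite[Lemma~3.2]{HSW19}---so there is nothing internal to compare against; your argument (the purely order-theoretic equivalence of (i) and (ii), followed by identifying coatoms with complements of single conjugacy classes via Lemma~\ref{lem:2.8}, checking that meets are set intersections, and a De Morgan computation) is essentially the standard proof from that reference. One small point to flag: for (iii)~$\Rightarrow$~(ii) you use that \emph{every} set of the form $G\setminus C$, $C$ a conjugacy class, is a coatom, whereas Lemma~\ref{lem:2.8} as quoted in this paper only asserts the other inclusion; the converse is true and takes one line---$G\setminus C$ is a union of classes, hence a subrack, hence contained in some maximal subrack, which by Lemma~\ref{lem:2.8} equals $G\setminus C'$ for some class $C'\subseteq C$, forcing $C'=C$---but it should be said.
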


Let $\mathcal{L}$ be a lattice with the maximum element $\hat{1}$ and minimum element $\hat{0}$ that is isomorphic to the subrack lattice $\mathcal{R}(G)$. The information encoded in $\mathcal{L}$ is not always enough to determine which elements of $\mathcal{L}$ are corresponding to a subgroup of the group $G$ up to an isomorphism between $\mathcal{R}(G)$ and $\mathcal{L}$. For example, if $G$ is an abelian group of order $p^2$ we cannot determine whether the exponent of $G$ is $p$ or $p^2$ by looking at the isomorphism type of $\mathcal{L}$. On the other hand, if the group $G$ is non-abelian we can argue that some particular elements of $\mathcal{L}$ would be the image of a subgroup of $G$ under \emph{any} isomorphism between $\mathcal{R}(G)$ and $\mathcal{L}$. As an example, following \cite{HSW19}, we may define $\mathsf{M}(\mathcal{L})$ as the set of all $S\in \mathcal{L}$ satisfying all of
\begin{enumerate}
\item $S$ is not closed in $\mathcal{L}$,
\item $\{T\in \mathcal{L}\colon S\leq T \text{ and } \overline{S}\nleq T\} = \{S\}$,
\item if $\overline{S}\leq T$ then $T$ is closed in $\mathcal{L}$,
\item $\mathsf{Int}([\hat{0},\overline{S}]_{\mathcal{L}})$ is not a Boolean algebra.
\end{enumerate}

Following Lemma will be useful in proving Theorem~\ref{thm:pnil}.

\begin{lem}[see {\cite[Lemma~3.4]{HSW19}}]\label{lem:3.4}
  Let $G$ be a finite group and $f\colon \mathcal{R}(G) \to \mathcal{L}$ be a lattice isomorphism. The following statements hold.
  \begin{enumerate}
  \item If $S\in \mathsf{M}(\mathcal{L})$, then $f^{-1}(S)$ is a non-normal subgroup of $G$.
  \item If $M$ is a non-normal maximal subgroup of $G$, then $f(M)$ is an element of $\mathsf{M}(\mathcal{L})$.
  \end{enumerate}
\end{lem}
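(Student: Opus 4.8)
The plan is to transport the entire question through $f$ and reduce both assertions to an intrinsic description inside $\mathcal{R}(G)$. A lattice isomorphism carries coatoms to coatoms, intervals to isomorphic intervals, the closure operator to the closure operator, closed elements to closed elements, and Boolean intervals to Boolean intervals; hence $f(M)\in\mathsf{M}(\mathcal{L})$ is equivalent to $M$ satisfying conditions (1)--(4) with $\mathcal{L}$ replaced by $\mathcal{R}(G)$, and $f^{-1}(S)$ is governed by the same four conditions read in $\mathcal{R}(G)$. I then use the dictionary supplied by Lemmas~\ref{lem:3.2} and \ref{lem:2.8}: the closed subracks of $G$ are exactly the unions of conjugacy classes, the coatoms of $\mathcal{R}(G)$ are the complements $G\setminus C$ of single conjugacy classes, and consequently the closure of a subrack $S$ is $\overline{S}=G\triangleright S$, the union of the conjugacy classes meeting $S$; in particular $\overline{S}\in\mathcal{R}(G)$ and the interval $[\hat{0},\overline{S}]_{\mathcal{R}(G)}$ is simply $\mathcal{R}(\overline{S})$. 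The recurring tool is Lemma~\ref{lem:subrack}(iii): a subrack $T$ satisfies $\langle T\rangle\triangleright T=T$, so whenever $\langle T\rangle=G$ the subrack $T$ is conjugation-invariant, i.e.\ closed.

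For part (2), let $M$ be a non-normal maximal subgroup of $G$. Condition (1) holds because a subgroup is a union of conjugacy classes if and only if it is normal. Conditions (2) and (3) both rest on the observation that $\langle\overline{M}\rangle$ is a normal subgroup of $G$ containing the maximal non-normal subgroup $M$, and hence equals $G$: if a subrack $T$ satisfies $\overline{M}\le T$ then $\langle T\rangle=G$, so $T=G\triangleright T$ is closed, which is (3); and if $M\subsetneq T$, choosing $t\in T\setminus M$ gives $\langle T\rangle\supseteq\langle M,t\rangle=G$ by maximality, whence $\overline{M}=G\triangleright M\subseteq\langle T\rangle\triangleright T=T$, which is (2). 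The argument for (2) in particular shows that $M$ is a maximal subrack of $\overline{M}$ and $M\neq\overline{M}$. For condition (4) one must verify that $\mathsf{Int}(\mathcal{R}(\overline{M}))$ is not a Boolean algebra; here the point is that $M$ is a coatom of $\mathcal{R}(\overline{M})$ which is not a union of conjugacy classes, so choosing a class $C\subseteq\overline{M}$ with $\emptyset\neq M\cap C\subsetneq C$ one finds that $\mathsf{Int}(\mathcal{R}(\overline{M}))$ strictly refines the Boolean algebra on the classes of $\overline{M}$, and a short inspection of its atoms and complements shows that it cannot itself be Boolean.

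For part (1), suppose $S$ satisfies (1)--(4) inside $\mathcal{R}(G)$. First, (1) and (2) together force $\overline{S}$ to cover $S$ in $\mathcal{R}(G)$: $\overline{S}\neq S$ by (1), while a subrack strictly between $S$ and $\overline{S}$ would contradict (2); equivalently, $S$ is a maximal subrack of $\overline{S}$ that is not closed. Now put $H=\langle S\rangle$. If $H\neq S$, then $H$ is a subrack properly containing $S$, so $\overline{S}\le H$ by (2) and $H$ is closed by (3), i.e.\ $H\trianglelefteq G$. The principal step is to exclude this alternative by means of (4): one argues that if $S$ is not a subgroup then the rack $\overline{S}=G\triangleright S$ has no maximal subrack other than its sub-unions of conjugacy classes, so $\mathsf{Int}(\mathcal{R}(\overline{S}))$ is the Boolean algebra on the conjugacy classes contained in $\overline{S}$, contradicting (4); the same observation also disposes of the degenerate possibility $\overline{S}=G$, for which $\mathsf{Int}(\mathcal{R}(G))$ is the full Boolean algebra on the conjugacy classes of $G$. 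Hence $H=S$, so $S$ is a subgroup, and it is non-normal by (1). Finally $S$ is maximal: if $S\subsetneq K\le G$ with $K$ a subgroup, then picking $t\in K\setminus S$ gives, by (2), $\overline{S}\subseteq\langle S,t\rangle_{\mathsf{rk}}\subseteq K$, so the normal subgroup $\langle\overline{S}\rangle$ lies in $K$; since (4) rules out $\langle\overline{S}\rangle$ being a proper normal subgroup sitting strictly above $S$ (by the same Boolean-algebra analysis applied to $\overline{S}$), we get $\langle\overline{S}\rangle=G$ and hence $K=G$.

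The main obstacle, in both directions, is condition (4). One has to show that, read purely in the lattice, ``$\mathsf{Int}$ of the interval below $\overline{S}$ fails to be a Boolean algebra'' is a faithful detector of the group-theoretic property ``$\overline{S}$ is the set-theoretic union of the conjugates of a non-normal subgroup of $G$''. Concretely, the crux is the rack-theoretic statement that a union of conjugacy classes $R$ admits a maximal subrack which is not itself a union of conjugacy classes precisely when $R=G\triangleright S$ for some non-normal subgroup $S$, together with the verification that the presence of such an ``exotic'' maximal subrack is exactly what destroys the Boolean structure of $\mathsf{Int}(\mathcal{R}(R))$. Everything else --- conditions (1), (2), (3) in both directions, and the passage from ``subgroup'' to ``maximal subgroup'' --- is routine bookkeeping with the identity $\langle T\rangle\triangleright T=T$ and the maximality of $M$.
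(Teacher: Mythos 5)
First, a remark on the comparison itself: the paper does not prove this lemma --- it is imported verbatim from \cite[Lemma~3.4]{HSW19} --- so there is no in-paper argument to measure you against, and your proposal has to stand on its own. Much of it does. The reduction through the lattice isomorphism, the identification $\overline{S}=K_G(S)$ via Lemmas~\ref{lem:2.8} and \ref{lem:3.2}, and your verification of conditions (1)--(3) in part (2) (using $\langle T\rangle\triangleright T=T$ and the fact that the normal closure of a non-normal maximal subgroup is all of $G$) are correct, as is the first half of part (1): conditions (2) and (3) do force $\langle S\rangle$ to be a closed, hence normal, subgroup whenever $\langle S\rangle\neq S$.

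The genuine gap is your treatment of condition (4), and it is fatal to part (1) as written. The poset $\mathsf{Int}([\hat{0},\overline{S}])$ depends only on the rack $\overline{S}$, i.e.\ only on which conjugacy classes meet $S$, not on whether $S$ itself is a subgroup; so the principle you invoke --- ``if $S$ is not a subgroup then $\overline{S}$ has no maximal subrack other than unions of conjugacy classes, hence $\mathsf{Int}([\hat{0},\overline{S}])$ is the Boolean algebra on the classes'' --- cannot be right, and indeed both halves of your stated ``crux'' fail already in $S_3$. The single class $R=\{(12),(13),(23)\}$ has only singletons as maximal subracks (none a union of classes), yet $R$ is not $G\triangleright S$ for any subgroup $S$, since no subgroup avoids the identity. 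Conversely, $S=\{e,(123)\}$ is a subrack satisfying (1)--(3) that is not a subgroup; its closure is $A_3$, whose maximal subracks are the two-element subsets --- most of them not unions of $G$-classes --- and yet $\mathsf{Int}([\hat{0},A_3])$ \emph{is} Boolean, on the three singletons rather than on the two classes. This last example shows both that condition (4) is doing exactly the work of excluding such $S$, and that it does so for reasons other than the ones you give. What your argument actually requires is the implication ``$S$ satisfies (1)--(3) and $\langle S\rangle\neq S$ $\Rightarrow$ $\mathsf{Int}([\hat{0},\overline{S}])$ is Boolean,'' and this needs a real proof exploiting (1)--(3) (for instance, that $\overline{S}$ then sits inside the proper normal subgroup $\langle S\rangle$); you supply none. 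The same vagueness affects condition (4) in part (2): ``$\mathsf{Int}$ strictly refines the Boolean algebra on the classes'' is not by itself incompatible with being Boolean, as the $A_3$ example shows, so ``a short inspection of its atoms and complements'' is not a substitute for an argument. A minor point in closing: part (1) claims only that $f^{-1}(S)$ is a non-normal subgroup, so your maximality argument is not needed --- which is just as well, since it rests on the same unproved principle.
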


In the rest of this paper we will implicitly identify the lattice $\mathcal{L}$ with the subrack lattice $\mathcal{R}(G)$ so that we won't need to refer to $\mathcal{L}$ anymore. To be more precise, we fix a set $L$ whose elements are labels for the atoms of $\mathcal{L}$ as well as for the corresponding atoms of $\mathcal{R}(G)$ under the isomorphism. Further, by abuse of notation, we identify $G=L$ so that the elements of $L$ can be manipulated according to the group operation.

For a subset $S$ of a group $G$ we define $K_G(S)$ as the union of the conjugacy classes of elements of $S$ in the group $G$. When $S=\{a\}$ is a singleton we write $K_G(a) := K_G(S)$ for simplicity. The following auxiliary result would be instrumental in proving Theorem~\ref{thm:nil}.

\begin{lem}\label{lem:int1}
  Let $G$ be a finite group and $a,b$ be elements of $G$. If $b = az$, where $z$ is a central element of $G$, then the permutation $\pi$ of $G$ interchanging $K_G(a)$ with $K_G(b)$ by mapping $gag^{-1}$ to $gbg^{-1}$ and vice versa for all $g\in G$ and fixing the rest of the elements of $G$ induces an automorphism of $\mathcal{R}(G)$.
\end{lem}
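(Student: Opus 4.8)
The plan is to verify directly that the permutation $\pi$ respects the subrack structure, i.e.\ that a subset $S \subseteq G$ is a subrack if and only if $\pi(S)$ is a subrack. Since $\pi$ is a bijection of $G$ preserving inclusions (it is a permutation of the atoms), once we know $\pi$ carries subracks to subracks and back it will automatically be a lattice automorphism of $\mathcal{R}(G)$. By Lemma~\ref{lem:subrack}(iv) it suffices to show that for all $x, y \in G$ we have $\pi(\langle x, y\rangle_{\mathsf{rk}}) = \langle \pi(x), \pi(y)\rangle_{\mathsf{rk}}$; actually it will be cleaner to check the equivalent statement that $\pi(S)$ is closed under $\triangleright$ whenever $S$ is, using Lemma~\ref{lem:subrack}(ii), i.e.\ $\pi(S) = \pi(S) \triangleright \pi(S)$.

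The key computation is to understand how $\pi$ interacts with conjugation. Write $c = gag^{-1}$ for a generic element of $K_G(a)$; then $\pi(c) = gbg^{-1} = g a z g^{-1} = (gag^{-1}) z = cz$, since $z$ is central. Symmetrically, if $c = gbg^{-1} \in K_G(b)$, then $\pi(c) = gag^{-1} = cz^{-1}$. And $\pi(c) = c$ for $c \notin K_G(a) \cup K_G(b)$. So in every case $\pi(c) = c \cdot \zeta(c)$ where $\zeta(c) \in \{1, z, z^{-1}\}$ is a central element. The crucial point is that $K_G(a)$ and $K_G(b)$ are each closed under conjugation, so for any $g \in G$ and any $c \in G$ we have $g\,\pi(c)\,g^{-1} = \pi(g c g^{-1})$: indeed, conjugation by $g$ permutes $K_G(a)$, permutes $K_G(b)$, permutes the complement, and $\zeta$ is constant on each conjugacy class contained in these sets (this uses $z$ central, so $c$ and $cz$ lie in $K_G(b)$ together with $c \in K_G(a)$, etc.). Hence $\pi$ commutes with every inner automorphism of $G$.

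Granting that, let $S$ be a subrack and take $\pi(u), \pi(v) \in \pi(S)$ with $u, v \in S$. Then
\[
  \pi(u) \triangleright \pi(v) = \pi(u)\,\pi(v)\,\pi(u)^{-1}.
\]
Since $\pi(u) = u\,\zeta(u)$ with $\zeta(u)$ central, $\pi(u)\,\pi(v)\,\pi(u)^{-1} = u\,\pi(v)\,u^{-1} = \pi(u v u^{-1}) = \pi(u \triangleright v)$ by the commutation property just established. As $S$ is a subrack, $u \triangleright v \in S$, so $\pi(u) \triangleright \pi(v) \in \pi(S)$; thus $\pi(S)$ is closed under $\triangleright$, hence $\pi(S) \subseteq \pi(S) \triangleright \pi(S)$, and since $\pi(S)$ is finite and $\triangleright$ by $\pi(u)$ is injective we get equality, so $\pi(S)$ is a subrack by Lemma~\ref{lem:subrack}. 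Applying the same argument to $\pi^{-1}$ (which is the permutation interchanging $K_G(b)$ with $K_G(a)$ via $b = a z$ read as $a = b z^{-1}$, of the same form) shows $\pi^{-1}$ also sends subracks to subracks. Therefore $\pi$ is an automorphism of $\mathcal{R}(G)$.

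The main obstacle, and the only place any care is needed, is the commutation identity $g\,\pi(c)\,g^{-1} = \pi(g c g^{-1})$: one must check that $\zeta$ is genuinely well-defined and constant on conjugacy classes, in particular that $K_G(a)$ and $K_G(b)$ are disjoint from each other and from the fixed part in a way compatible with the prescription (if $a$ and $b$ happen to be conjugate, or $z = 1$, the permutation is the identity and there is nothing to prove; otherwise $K_G(a) \cap K_G(b) = \emptyset$). Everything else is a short manipulation exploiting that $z \in Z(G)$ makes the "twist" $\zeta(c)$ invisible to conjugation and to the group multiplication appearing in $c \triangleright c' = c c' c^{-1}$.
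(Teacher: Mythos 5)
Your proof is correct and follows essentially the same route as the paper's: both hinge on the observation that the central twist $z$ is invisible to conjugation (the paper phrases this as $(gag^{-1})\triangleright x=y$ iff $(gbg^{-1})\triangleright xz=yz$, you as $\pi$ commuting with inner automorphisms), and then conclude that $\pi$ sends subracks to subracks. Your writeup is a more systematic version of the same computation, and your shortcut of deducing the lattice automorphism from order-preservation in both directions cleanly replaces the paper's explicit check of joins and meets.
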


\begin{proof}
  Clearly, the mapping $\pi$ is a permutation of $G$. Let $S$ and $T$ be subracks of $G$. By assumption $S\triangleright S = S$. Since $(gag^{-1})\triangleright x = y$ if and only if $(gbg^{-1})\triangleright xz = yz$ for every $g,x,y\in G$, we see that $\pi(S)\triangleright \pi(S) = \pi(S)$. In other words, the image of the subrack $S$ under $\pi$ is a subrack. A similar argument shows that $x\in S\vee T$ if and only if $\pi(x)\in \pi(S)\vee \pi(T)$, so that $\pi(S\vee T) = \pi(S)\vee \pi(T)$. Finally, since $S\wedge T = S\cap T$, the equality $\pi(S\wedge T) = \pi(S)\wedge \pi(T)$ holds as well.
\end{proof}

Notice that by Lemma~\ref{lem:int1} any pair of elements $a$ and $b$ of $G$ that are lying in the same coset of $Z(G)$ are indistinguishable from each other considered as elements of $\mathcal{R}(G)$. Additionally, the conjugacy classes $K_G(a)$ and $K_G(b)$ of those elements are also indistinguishable in $\mathcal{R}(G)$.

We conclude this section by proving another statement which is very similar to Lemma~\ref{lem:int1}. It would be useful to prove Theorem~\ref{thm:pnil}.

\begin{lem}\label{lem:int2}
  Let $G$ be a finite group and $a,b$ be elements of $G$. If the subgroup generated by $a$ is same with the subgroup generated by $b$, then the permutation $\pi$ of $G$ interchanging $K_G(a)$ with $K_G(b)$ by mapping $gag^{-1}$ to $gbg^{-1}$ and vice versa for all $g\in G$ and fixing the rest of the elements of $G$ induces an automorphism of $\mathcal{R}(G)$.
\end{lem}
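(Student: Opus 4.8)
The plan is to follow the template of the proof of Lemma~\ref{lem:int1}: first verify that $\pi$ is a well-defined permutation of $G$, then show that $\pi$ sends every subrack to a subrack, and finally deduce that $\pi$ induces an automorphism of $\mathcal{R}(G)$. The step requiring a new idea is the middle one, because the identity that drives the proof of Lemma~\ref{lem:int1} --- that conjugation by $gag^{-1}$ and conjugation by $gbg^{-1}$ coincide --- is no longer available; its replacement will be the observation that $\pi$ is a \emph{power map} on each conjugacy class of $G$.

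First I would unpack the hypothesis. Since $\langle a\rangle=\langle b\rangle$, the elements $a$ and $b$ have a common order $n$, we may write $b=a^{k}$ with $\gcd(k,n)=1$, and $C_{G}(a)=C_{G}(\langle a\rangle)=C_{G}(\langle b\rangle)=C_{G}(b)$; this last equality is precisely what is needed for $\pi$ to be well defined. For any $g\in G$ we have $gbg^{-1}=ga^{k}g^{-1}=(gag^{-1})^{k}$, so $\pi$ acts on $K_{G}(a)$ as the power map $c\mapsto c^{k}$ and, symmetrically, on $K_{G}(b)$ as $d\mapsto d^{k'}$ where $kk'\equiv 1\pmod n$; in particular $\pi$ is an involution. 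More is true: on \emph{every} conjugacy class of $G$ the permutation $\pi$ is a power map (with exponent $k$ on $K_{G}(a)$, $k'$ on $K_{G}(b)$, and $1$ elsewhere), and a power map commutes with conjugation, so $\pi(gzg^{-1})=g\,\pi(z)\,g^{-1}$ for all $g,z\in G$. I would also isolate one elementary fact: if $R$ is a subrack of $G$ and $p,q\in R$, then $p^{j}qp^{-j}\in R$ for every $j\ge 0$; indeed $p^{j}qp^{-j}=p^{j}\triangleright q$ is obtained from $q$ by applying the operation $p\triangleright(-)$ exactly $j$ times (axiom (A1)), and each application stays inside $R$ since $p\in R$.

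Next I would show that $\pi(S)$ is a subrack whenever $S$ is. By Lemma~\ref{lem:subrack} this amounts to checking $xyx^{-1}\in\pi(S)$ for all $x,y\in\pi(S)$, the reverse inclusion $\pi(S)\subseteq\pi(S)\triangleright\pi(S)$ being automatic because $x\triangleright x=x$. Put $p=\pi^{-1}(x)$ and $q=\pi^{-1}(y)$, so $p,q\in S$. Since $\pi$ restricts to a power map on the conjugacy class of $p$, the element $x=\pi(p)$ equals $p^{m}$ where $m=1$, $k$, or $k'$ according as $p$ lies outside $K_{G}(a)\cup K_{G}(b)$, in $K_{G}(a)$, or in $K_{G}(b)$. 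Using the conjugation-equivariance of $\pi^{-1}=\pi$ recorded above, we then get $\pi^{-1}(xyx^{-1})=x\,\pi^{-1}(y)\,x^{-1}=xqx^{-1}=p^{m}qp^{-m}$, which lies in $S$ by the elementary fact; hence $xyx^{-1}=\pi(\pi^{-1}(xyx^{-1}))\in\pi(S)$, as needed.

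Finally, since $\pi$ is an involution carrying subracks to subracks, it restricts to a bijection of $\mathcal{R}(G)$ onto itself whose inverse (namely $\pi$ itself) also carries subracks to subracks; being a bijection of the set $G$, it preserves inclusion in both directions, and is therefore an automorphism of the poset, hence of the lattice, $\mathcal{R}(G)$. (If one prefers to imitate the proof of Lemma~\ref{lem:int1} literally, one may instead verify the three identities that $\pi(S)$ is a subrack, $\pi(S\vee T)=\pi(S)\vee\pi(T)$, and $\pi(S\wedge T)=\pi(S)\wedge\pi(T)$, the last because meets are intersections.) I expect the one genuine obstacle to be the middle step above: without the coincidence of conjugations available in Lemma~\ref{lem:int1}, one is forced to use instead that $\pi$ is a power map on each conjugacy class --- so that $x$ is a prescribed power of the element $\pi^{-1}(x)\in S$ --- combined with the closure of a subrack under repeated conjugation by its own elements. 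One should also note that in the degenerate case $K_{G}(a)=K_{G}(b)$ the prescription ``$gag^{-1}\mapsto gbg^{-1}$, and vice versa'' is to be read as the identity on that class --- there being nothing to prove --- since as literally stated it is single-valued only when $k^{2}\equiv1\pmod n$.
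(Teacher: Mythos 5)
Your proof is correct and follows essentially the same route as the paper's: writing $b=a^k$, observing that $\pi$ intertwines conjugation (the paper phrases this as $x\triangleright gag^{-1}=hah^{-1}$ iff $x\triangleright gbg^{-1}=hbh^{-1}$), and using that a subrack is closed under conjugation by powers of its own elements (the paper's $\langle S\rangle\triangleright S=S$ together with $\pi(\langle S\rangle)=\langle S\rangle$). Your version is merely more explicit about well-definedness, the involution property, and the degenerate case $K_G(a)=K_G(b)$, all of which the paper leaves implicit.
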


\begin{proof}
  Let $S$ and $T$ be subracks of $G$. By assumption $\langle S\rangle \triangleright S = S$. Since $a$ and $b$ generates the same subgroup, we see that $b = a^k$ for some integer $k$ and $\pi(\langle S \rangle) = \langle S \rangle$. Observe that $x\triangleright gag^{-1} = hah^{-1}$ if and only if $x\triangleright gbg^{-1} = hbh^{-1}$ for every $x,g,h\in G$, so that  $\pi(S)\triangleright \pi(S) = \pi(S)$. Similar reasoning shows that $\pi(S\vee T) = \pi(S)\vee \pi(T)$.
\end{proof}

\section{Nilpotence class of the group}
\label{sec:nil}

It is easy to determine which elements of $\mathcal{R}(G)$ would constitute a maximal abelian subgroup of $G$. Let $\mathsf{A}(G)$ be the set of maximal elements $A\in \mathcal{R}(G)$ having the property that the interval $[\emptyset,A]_{\mathcal{R}(G)}$ is a Boolean algebra. Let $\mathsf{N}(G)$ be the set of elements $N\in \mathcal{R}(G)$ formed in the following way. Each element $N\in \mathsf{N}(G)$ is the union of the conjugacy classes of $G$ that are contained by some fixed $A\in \mathsf{A}(G)$. Notice that the combinatorial structure of $\mathcal{R}(G)$ is enough to determine both $\mathsf{A}(G)$ and $\mathsf{N}(G)$.

\begin{lem}\label{lem:ab}
  Let $G$ be a non-abelian finite group. The following statements hold.
  \begin{enumerate}
  \item An element $A\in \mathcal{R}(G)$ is a maximal abelian subgroup of $G$ if and only if the subrack $A$ lies in the set $\mathsf{A}(G)$.
  \item An element $N\in \mathcal{R}(G)$ is a maximal normal abelian subgroup of $G$ if and only if the subrack $N$ lies in the set $\mathsf{N}(G)$.
  \end{enumerate}
\end{lem}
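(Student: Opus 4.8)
The plan is to prove the two statements in order, using the first to bootstrap the second. For statement (1), the key observation is that the interval $[\emptyset, A]_{\mathcal{R}(G)}$ is a Boolean algebra precisely when every subset of $A$ is a subrack of $A$, which by Lemma~\ref{lem:subrack}(iv) happens exactly when $\langle a,b\rangle_{\mathsf{rk}} \subseteq \{a,b\}$ for every $a,b\in A$, i.e. when $aba^{-1}\in\{a,b\}$ and $bab^{-1}\in\{a,b\}$ for all $a,b\in A$; a short case analysis shows this forces $ab=ba$, so $A$ is a set of pairwise-commuting elements. First I would establish that if $A$ is a maximal abelian subgroup then $[\emptyset,A]_{\mathcal{R}(G)}$ is Boolean (every subset of an abelian group is a subrack) and $A$ is maximal with this property: any subrack $B\supsetneq A$ with $[\emptyset,B]_{\mathcal{R}(G)}$ Boolean would be a set of commuting elements strictly containing $A$, hence $\langle B\rangle$ would be abelian, contradicting maximality of $A$ (here one uses that if $B$ is a commuting subrack then $\langle B\rangle$ is abelian, which follows since commuting elements generate an abelian group). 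Conversely, if $A\in\mathsf{A}(G)$, the Boolean condition forces $A$ to be a commuting subrack, so $\langle A\rangle$ is abelian; since $\langle A\rangle$ is a subrack containing $A$ with $[\emptyset,\langle A\rangle]_{\mathcal{R}(G)}$ Boolean, maximality of $A$ gives $A=\langle A\rangle$, so $A$ is an abelian subgroup, and its maximality among abelian subgroups follows from its maximality in $\mathsf{A}(G)$ together with the fact that any abelian subgroup containing $A$ is a commuting subrack with Boolean interval.

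For statement (2), let $N$ be a maximal normal abelian subgroup of $G$. By a standard fact, every maximal normal abelian subgroup is contained in some maximal abelian subgroup $A$, and by (1) this $A$ lies in $\mathsf{A}(G)$. Since $N$ is normal it is a union of conjugacy classes of $G$ (by Lemma~\ref{lem:3.2}(iii), or directly), and each such class is contained in $A$; hence $N\in\mathsf{N}(G)$. Conversely, suppose $N\in\mathsf{N}(G)$, say $N$ is the union of all conjugacy classes of $G$ contained in some fixed $A\in\mathsf{A}(G)$. By (1), $A$ is a maximal abelian subgroup; in particular $A$ is abelian, so $N\subseteq A$ is a set of commuting elements and is a union of conjugacy classes, hence $\langle N\rangle$ is an abelian normal subgroup of $G$. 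I would then argue $N=\langle N\rangle$: since $N\subseteq A$ and $A$ is abelian, $\langle N\rangle\subseteq A$, and $\langle N\rangle$, being a normal subgroup, is a union of conjugacy classes of $G$ all contained in $A$, so by definition of $N$ we get $\langle N\rangle\subseteq N$; thus $N$ is an abelian normal subgroup of $G$. For maximality, suppose $M$ is a normal abelian subgroup with $N\subseteq M$; then $M$ is contained in some maximal abelian subgroup $A'$, which lies in $\mathsf{A}(G)$ by (1), and $M$ (being normal) is a union of conjugacy classes of $G$ contained in $A'$. The main obstacle is showing one may take $A'=A$, i.e. that $N$ being the ``conjugacy-class-saturation'' inside a specific $A$ already captures all such normal abelian overgroups; I would handle this by noting $N\subseteq A\cap A'$, observing $A\cap A'$ is an abelian subgroup, and using a maximality/exchange argument — or, more robustly, by showing directly from the definition of $\mathsf{N}(G)$ together with statement (1) that $N$ coincides with the union of all conjugacy classes contained in \emph{any} maximal abelian subgroup through which $M$ factors, so that $M\subseteq N$ and hence $M=N$.

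The step I expect to be the main obstacle is the converse direction of (2): ensuring that the element $N$ built from a fixed $A\in\mathsf{A}(G)$ is genuinely a \emph{maximal} normal abelian subgroup, rather than merely a normal abelian subgroup sitting inside one. This requires care because a priori two distinct maximal abelian subgroups $A,A'$ could give rise to different sets $\mathsf{N}(G)$-elements, and one must check that the ``class-closure'' construction is insensitive to this choice once normality is imposed — equivalently, that every normal abelian subgroup containing $N$ is forced, by being a union of conjugacy classes lying in an abelian subgroup, to lie inside $N$ itself. The cleanest route is: (a) prove that for any maximal abelian $A$, the union of conjugacy classes of $G$ contained in $A$ is a normal subgroup (needs: closure under products, which uses that everything lies in the abelian group $A$); (b) prove it is maximal normal abelian by a Zorn/maximality argument reducing to the fact that any larger normal abelian subgroup $M$ satisfies $M\subseteq A$ after replacing $A$ by a maximal abelian subgroup containing $M$ and checking this does not shrink the class-closure. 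All remaining verifications are routine.
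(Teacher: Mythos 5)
Your treatment of part (1) is correct and is essentially the paper's own argument: the interval $[\emptyset,A]_{\mathcal{R}(G)}$ is Boolean exactly when $A$ is a set of pairwise commuting elements, a maximal such subrack is forced to coincide with the abelian subgroup it generates, and the two notions of maximality then match up. The only difference is that you spell out the case analysis $aba^{-1}\in\{a,b\}$ explicitly, which the paper leaves implicit.

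For part (2) the paper's entire proof is the single observation that the union of the conjugacy classes of $G$ contained in $A$ is the core of $A$, i.e.\ the largest normal subgroup of $G$ inside $A$; your forward direction is the correct elaboration of this, except for one missing line: from ``$N$ is a union of classes each contained in $A$'' you still need that $N$ is the union of \emph{all} classes contained in $A$, which follows because that full union is itself a normal abelian subgroup of $G$ containing $N$, so maximality of $N$ forces equality. The obstacle you single out in the converse is, however, not merely the hard step but an actual failure of the literal statement: the core of a maximal abelian subgroup need not be a maximal normal abelian subgroup. In $S_3$ the subgroup $A=\langle(1\,2)\rangle$ lies in $\mathsf{A}(S_3)$ and the only conjugacy class it contains is $\{e\}$, so $\{e\}\in\mathsf{N}(S_3)$ as literally defined, while $A_3$ is a strictly larger normal abelian subgroup. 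Consequently no exchange argument via $A\cap A'$ can succeed, since distinct maximal abelian subgroups genuinely have incomparable cores; the definition of $\mathsf{N}(G)$ has to be read as taking only the maximal elements among the class-closures of members of $\mathsf{A}(G)$, and under that reading your argument (every element of $\mathsf{N}(G)$ is a normal abelian subgroup, and every maximal normal abelian subgroup arises as the class-closure of a maximal abelian subgroup containing it) does close the proof. The paper's one-sentence proof elides this same point.
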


\begin{proof}
  \emph{Statement~1.} Suppose $A$ is a maximal abelian subgroup of $G$. Clearly, the interval $[\emptyset,A]_{\mathcal{R}(G)}$ is a Boolean algebra. Let $B$ be a maximal element of $\mathcal{R}(G)$ with the property $[\emptyset,B]_{\mathcal{R}(G)}$ is a Boolean algebra. Then the elements of $G$ corresponding to the atoms of $[\emptyset,B]_{\mathcal{R}(G)}$ generate an abelian subgroup $\langle B\rangle$ containing $B$. Since $[\emptyset,B]_{\mathcal{R}(G)}$ is a Boolean algebra and it is a maximal element in $\mathcal{R}(G)$ with this property, we see that $\langle B\rangle = B$. Next suppose $B$ contains $A$. Since $G$ is non-abelian, the subrack $B$ is a proper subset of $G$. Also, since $B$ is a subgroup which is abelian and $A$ is a maximal abelian subgroup of $G$ contained by $B$, we see that $A=B$.

  Next, suppose $A$ is a maximal element of $\mathcal{R}(G)$ with the property that the interval $[\emptyset,A]_{\mathcal{R}(G)}$ is a Boolean algebra. As was explained in the previous paragraph $A$ is an abelian subgroup of $G$ which is maximal.

  \emph{Statement~2.} Observe that the core of $A\in \mathcal{R}(G)$ is just the union of the conjugacy classes of $G$ that are contained by $A$.
\end{proof}

Two elements of a group commutes with each other if and only if the join of the corresponding atoms in the subrack lattice covers no other elements. Therefore, the information of the subrack lattice of a finite group can be used to determine the centralizers of the subsets of the group. In particular, we can locate the center of the group in the subrack lattice.

A representation of a finite group is monomial if it is induced from a linear representation of a subgroup. A group whose all complex irreducible representations are monomial is called a \emph{monomial} group. The class of monomial groups lies between the classes of supersolvable and solvable groups. By \cite[Theorem~1.1]{HSW19} the isomorphism type of the subrack lattice determines if the group is supersolvable or solvable. Thus, it is natural to ask the following

\begin{question2p}
  If $G$ is a monomial group, and if $H$ is a group with $\mathcal{R}(G)\cong \mathcal{R}(H)$, can we conclude $H$ is monomial?
\end{question2p}

We don't know how to approach this problem; nonetheless, we shall make a remark. In proving supersolvable groups are monomial an important ingredient is the following property of supersolvable groups: There exist an abelian normal subgroup of the group which is not central. By our previous discussions we see that this property can easily be recognized with the information provided by the subrack lattice of the group.

\begin{lem}\label{lem:coset}
  Let $G$ be a finite group and $N$ be a normal subgroup of $G$. Then any coset of $N$ is a subrack of $G$. Moreover, for any subset  $\{a_1N,\dots,a_kN\}$ of $G/N$, the join of those cosets in $\mathcal{R}(G)$ is the union of the cosets of $N$ containing $\langle a_1,\dots,a_k \rangle_{\mathsf{rk}}$.
\end{lem}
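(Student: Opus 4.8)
The plan is to establish the two assertions separately, both relying on Lemma~\ref{lem:subrack}. For the first assertion, let $aN$ be a coset of the normal subgroup $N$. By the equivalence of (i) and (ii) in Lemma~\ref{lem:subrack} it suffices to check $aN \triangleright aN = aN$. For $x, y \in N$ we compute $(ax) \triangleright (ay) = (ax)(ay)(ax)^{-1} = a\bigl(xaya^{-1}x^{-1}\bigr)a^{-1}$, which lies in $aN$ precisely because $N$ is normal, so that $xaya^{-1}x^{-1} \in N$. Conversely, the identity element of $N$ lies in the image since $(ae)\triangleright(ae) = ae$; more care is needed to see that every element of $aN$ is hit, but this follows from axiom (A2) applied within the group, or more simply by noting that $\triangleright$ restricted to $aN$ is a bijection in its second argument. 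Hence $aN$ is a subrack.

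For the second assertion, write $J$ for the join in $\mathcal{R}(G)$ of the cosets $a_1N, \dots, a_kN$, and write $U$ for the union of all cosets of $N$ meeting (equivalently, containing) the subrack $R := \langle a_1,\dots,a_k\rangle_{\mathsf{rk}}$. First I would show $J \subseteq U$. Since each $a_iN$ is a subrack contained in the subrack $U$ (note $U$ is a subrack: it is a union of cosets of $N$, each of which is a subrack by the first part, and one checks directly using normality of $N$ that the union is closed under $\triangleright$ — indeed $U$ is exactly the preimage in $G$ of the subrack $\langle \bar a_1, \dots, \bar a_k\rangle_{\mathsf{rk}}$ of the quotient quandle $G/N$), the join $J$ of the $a_iN$ must lie below $U$. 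For the reverse inclusion $U \subseteq J$: each $a_i \in R$, and $R \subseteq J$ because $J$ is a subrack containing each $a_i$; so it is enough to see that $J$, being a subrack containing $a_i$, contains the whole coset $a_iN$ for each generator — but that is immediate since $a_iN \le J$ by definition of the join. Thus $R \cup \bigcup_i a_iN \subseteq J$. The remaining point is that $J$, being a subrack, must then contain every coset of $N$ that it meets: if $b \in J$ and $b \in cN$ with $cN \cap J \ne \emptyset$ we want $cN \subseteq J$.

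The main obstacle is precisely this last closure step: showing that a subrack $J$ which contains the generating set $R$ and is contained in the union-of-cosets subrack $U$ cannot be a \emph{proper} sub-union, i.e.\ that $J = U$. I would handle it as follows. The quotient map $q\colon G \to G/N$ sends subracks to subracks (it is a quandle homomorphism), and $q(J)$ is a subrack of $G/N$ containing $q(R) = \langle \bar a_1,\dots,\bar a_k\rangle_{\mathsf{rk}}$; since $q(U) = \langle \bar a_1,\dots,\bar a_k\rangle_{\mathsf{rk}}$ as well and $J \subseteq U$, we get $q(J) = q(U)$. So $J$ maps onto the full image; the only way $J$ could be a proper subset of $U$ is if $J$ omitted part of some fiber $q^{-1}(\bar x) = xN$. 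But $xN$ is a full coset lying in $U$, so there is some $y \in J$ with $q(y) = \bar x$, i.e.\ $y \in xN \cap J$. Now I use that $yN = xN$ is itself a subrack, that $J$ is a subrack, and that in the lattice $\mathcal{R}(G)$ the join $J = \bigvee_i a_iN$ is the \emph{smallest} subrack containing all the $a_iN$: consider $J' := J \cup xN$. One checks $J'$ is a subrack (again using normality of $N$: for $u \in J$, $v \in xN$ one verifies $u \triangleright v \in xN$ since $q(u\triangleright v) = q(u)\triangleright \bar x$ and $x N$ is $q$-saturated within $U$... ) — here some routine verification using $q$ and normality of $N$ finishes it, and $J' \supseteq J$ contains all the $a_iN$, forcing $J' \le J$, hence $xN \subseteq J$. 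Running this over all cosets in $U$ gives $U \subseteq J$, completing the proof. I expect the bookkeeping with the quotient quandle $G/N$ to be the cleanest route through this obstacle, since it reduces everything to the trivial observation that a subrack surjecting onto a quotient and contained in a saturated preimage of that quotient must be the whole preimage once one knows cosets of $N$ are subracks.
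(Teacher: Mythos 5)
Your overall strategy (verify the coset is closed under $\triangleright$, then sandwich the join between the preimage $U$ of the quotient subrack) is reasonable, and the inclusion $J\subseteq U$ is fine. But there are two problems. First, a small one: your computation $(ax)\triangleright(ay)=a(xaya^{-1}x^{-1})a^{-1}$ is wrong; the correct expansion is $(ax)(ay)(ax)^{-1}=a(xayx^{-1}a^{-1})$, and note that your expression $a\,n\,a^{-1}$ with $n\in N$ would land in $N$, not in $aN$, so as written the first assertion is not established. The fix is immediate ($xayx^{-1}a^{-1}=x\cdot(ayx^{-1}a^{-1})\in N$ by normality), and this is essentially the paper's one-line verification.

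The genuine gap is in the direction $U\subseteq J$. Your proposed set $J':=J\cup xN$ need not be a subrack, and the ``routine verification'' you defer cannot be completed as described: for $u\in J$ and $v\in xN$ you only know $q(u\triangleright v)=q(u)\triangleright\bar{x}$, which places $u\triangleright v$ in the coset $(u\triangleright x)N\subseteq U$; that coset is in general neither $xN$ nor contained in $J$ (the hypothesis $q(J)=q(U)$ says $J$ meets every coset of $U$, not that it contains one), so $J'$ need not be closed under $\triangleright$ and the minimality of the join gives you nothing. The missing ingredient is the \emph{exact} identity $bN\triangleright cN=(b\triangleright c)N$ --- surjectivity onto the whole target coset, which follows from normality since already $b\triangleright(cN)=(bcb^{-1})(bNb^{-1})=(b\triangleright c)N$. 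This is precisely the single observation the paper's proof records, and it closes the argument cleanly: $J$ is generated from $\bigcup_i a_iN$ by iterated applications of $\triangleright$ (no inverses are needed in a finite rack), and the identity shows that at every stage one obtains full cosets, so $J$ is a union of cosets whose image in $G/N$ is $\triangleright$-closed and contains each $\bar{a}_i$; hence $J\supseteq q^{-1}(\langle\bar{a}_1,\dots,\bar{a}_k\rangle_{\mathsf{rk}})=U$. Your homomorphism-only bookkeeping gives $\subseteq$ at each step but never forces the fibers to fill up, which is exactly where the proof must do work.
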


\begin{proof}
 Let $a$ be an element of $G$. Since $an_1\triangleright an_2 = an_1an_2n_1^{-1}a^{-1}\in aN$ for every $n_1,n_2\in N$, we see that the coset $aN$ is a subrack of $G$. For the latter part of the claim, observe that $a_1N\triangleright a_2N = (a_1\triangleright a_2)N$.
\end{proof}

Let $\mathcal{C}$ be a collection of subracks of $G$ and let $\mathsf{J}(\mathcal{C})$ be the subposet of $\mathcal{R}(G)$ formed in the following way. A subrack $S$ of $G$ is an element of $\mathsf{J}(\mathcal{C})$ if and only if $S$ is the union of some of the elements of $\mathcal{C}$ in $\mathcal{R}(G)$. Let $N$ be a normal subgroup of $G$ and let $\mathcal{C}$ be the partition of $G$ into the cosets of $N$. By Lemma~\ref{lem:coset} we see that the poset $\mathsf{J}(\mathcal{C})$ is isomorphic to the poset $\mathcal{R}(G/N)\setminus \{\emptyset\}$. Let $\mathcal{C}$ be a partition of $G$ into subsets each of which is a subrack of size $|N|$ such that $N\in \mathcal{C}$ and the join of any subset of $\mathcal{C}$ in $\mathcal{R}(G)$ is a union of some of the elements of $\mathcal{C}$. Unfortunately, those premises are not enough to conclude that $\mathsf{J}(\mathcal{C}) \cong \mathcal{R}(G/N)\setminus \{\emptyset\}$. However, we can determine an isomorphic copy of $\mathcal{R}(G/Z(G))\setminus \{\emptyset\}$ in $\mathcal{R}(G)$.

\begin{thm}\label{thm:nil}
  If $G$ is a finite nilpotent group of class $c$, and if $H$ is a group with $\mathcal{R}(G)\cong \mathcal{R}(H)$, then $H$ is nilpotent of class $c$. 
\end{thm}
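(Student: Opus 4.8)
The plan is an induction on the nilpotence class $c$, with the inductive step powered by a purely lattice-theoretic reconstruction of $\mathcal{R}(G/Z(G))$ out of $\mathcal{R}(G)$. To begin, $H$ is nilpotent by \cite[Theorem~1.1]{HSW19}, so only the equality of classes is at issue. The cases $c\le 1$ are immediate: the number of atoms of $\mathcal{R}(G)$ equals $|G|$, which isolates the trivial group (class $0$), and being abelian (class $1$) is detected both by \cite[Theorem~1.1]{HSW19} and by $[\emptyset,G]_{\mathcal{R}(G)}$ being a Boolean algebra. So let $c\ge 2$ and assume the statement for all smaller classes. Since $G/Z(G)$ is nilpotent of class $c-1$ and $Z(H)\ne 1$, the induction will close the moment we exhibit an isomorphism-invariant procedure that extracts a copy of $\mathcal{R}(G/Z(G))$ from $\mathcal{R}(G)$: feeding it $\mathcal{R}(H)$ then produces a copy of $\mathcal{R}(H/Z(H))$, an isomorphism $\mathcal{R}(G)\cong\mathcal{R}(H)$ yields $\mathcal{R}(G/Z(G))\cong\mathcal{R}(H/Z(H))$, and the inductive hypothesis forces $H/Z(H)$, hence $H$, to have class $c$.

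The first ingredient is to locate $Z(G)$ inside $\mathcal{R}(G)$. Two elements $x,y$ of $G$ commute exactly when the join $\{x\}\vee\{y\}$ covers no element of $\mathcal{R}(G)$ other than $\{x\}$ and $\{y\}$; hence the set of \emph{central atoms} is defined by a lattice property, and its join in $\mathcal{R}(G)$ is $\langle Z(G)\rangle_{\mathsf{rk}}=Z(G)$ because the elements of $Z(G)$ pairwise commute. So $Z:=Z(G)$ is singled out canonically as an element of $\mathcal{R}(G)$.

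The second, and principal, ingredient is to recover $\mathcal{R}(G/Z)\setminus\{\emptyset\}$ from the cosets of $Z$. By Lemma~\ref{lem:coset} every coset $aZ$ is a subrack of size $|Z|$; moreover any two elements of $aZ$ commute, so $x\triangleright y=y$ throughout $aZ$ and $[\emptyset,aZ]_{\mathcal{R}(G)}$ is a Boolean algebra of rank $|Z|$. Write $\mathcal{C}_0=\{aZ:a\in G\}$: this is a partition of the atoms into such ``Boolean blocks'', it contains $Z$, and by Lemma~\ref{lem:coset} the join in $\mathcal{R}(G)$ of any subfamily of $\mathcal{C}_0$ is again a union of members of $\mathcal{C}_0$ --- namely the preimage of the subrack of $G/Z$ generated by the corresponding cosets. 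Consequently $\bigcup_{a\in T}aZ\mapsto\{aZ:a\in T\}$ is a lattice isomorphism from $\mathsf{J}(\mathcal{C}_0)$ onto $\mathcal{R}(G/Z)\setminus\{\emptyset\}$, so adjoining a bottom element to $\mathsf{J}(\mathcal{C}_0)$ recovers $\mathcal{R}(G/Z)$.

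What remains --- and this is where I expect the real work to lie --- is to pin down the partition $\mathcal{C}_0$ in lattice terms, so that the whole reconstruction becomes invariant under an isomorphism $\mathcal{R}(G)\cong\mathcal{R}(H)$. The family $\mathfrak{P}$ of all partitions of the atoms into Boolean blocks of size $|Z|$ that include the already-identified block $Z$ and are closed under joins in $\mathcal{R}(G)$ is lattice-definable and contains $\mathcal{C}_0$. As the discussion preceding the theorem warns, the bare conditions ``equal-size subracks, join-closed'' are not by themselves enough for a general normal subgroup, so the task is to show that the extra Boolean-block requirement (that blocks are sets of pairwise commuting elements) does the job: either that the blocks of any $\mathcal{C}\in\mathfrak{P}$ are honestly cosets of $Z$, or, more modestly, that $\mathsf{J}(\mathcal{C})\cong\mathcal{R}(G/Z)\setminus\{\emptyset\}$ for every $\mathcal{C}\in\mathfrak{P}$. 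Here I would lean on Lemma~\ref{lem:int1}: its translation automorphisms permute $G$ within the cosets of $Z$, and I expect them to let one carry an arbitrary admissible partition onto $\mathcal{C}_0$, thereby supplying the needed invariance and completing the induction.
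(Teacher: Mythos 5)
Your overall strategy is the paper's: locate $Z(G)$ lattice-theoretically, reconstruct $\mathcal{R}(G/Z(G))\setminus\{\emptyset\}$ as $\mathsf{J}(\mathcal{C})$ for a lattice-definable partition $\mathcal{C}$ of the atoms, and climb the upper central series (you phrase this as induction on $c$, the paper as iteration until the poset becomes a singleton --- the same thing). The base cases and the identification of the center are fine. But the step you yourself flag as ``where the real work lies'' is a genuine gap, and it is exactly the point where the paper does something different. You propose to cut the family of admissible partitions down by requiring the blocks to be Boolean (size-$|Z|$ join-closed blocks of pairwise commuting elements, one of them being $Z$), and then hope that Lemma~\ref{lem:int1} carries an arbitrary admissible partition onto the coset partition $\mathcal{C}_0$. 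Neither half is established: you give no argument that the Boolean-block requirement forces $\mathsf{J}(\mathcal{C})\cong\mathcal{R}(G/Z)\setminus\{\emptyset\}$ for every admissible $\mathcal{C}$, and Lemma~\ref{lem:int1} only provides automorphisms that permute $G$ \emph{within} cosets of $Z$, which does not obviously transport a partition whose blocks are not cosets onto $\mathcal{C}_0$. Without this the construction is not isomorphism-invariant and the induction does not close.

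The paper closes the same gap with a different third condition on $\mathcal{C}$: for any two elements $a,b$ lying in a common block there must exist a permutation of $G$ interchanging $a$ with $b$ and $K_G(a)$ with $K_G(b)$, fixing everything else, that induces an automorphism of $\mathcal{R}(G)$ preserving $\mathcal{C}$. This condition is expressible in terms of the lattice and the partition alone; it is satisfied by the coset partition of $Z(G)$ by Lemma~\ref{lem:int1}; and, unlike the Boolean-block condition, it is strong enough to show directly that the join of blocks $C_{a_1},\dots,C_{a_k}$ is the union of the blocks meeting $\langle a_1,\dots,a_k\rangle_{\mathsf{rk}}$, so that $\mathsf{J}(\mathcal{C})\cong\mathcal{R}(G/Z)\setminus\{\emptyset\}$ follows from Lemma~\ref{lem:coset} for \emph{every} admissible $\mathcal{C}$, not just for $\mathcal{C}_0$. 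To repair your argument, either prove that your Boolean-block condition implies the paper's swap condition, or simply replace it by that condition.
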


\begin{proof}
  First we shall construct a poset which is isomorphic to $\mathcal{R}(G/Z(G))\setminus \{\emptyset\}$. Let $L$ be the set of atoms of $\mathcal{R}(G)$ which we identify with the elements of $G$. Let $\mathcal{C}$ be a partition of $L = G$ having the following properties:
  \begin{enumerate}[(i)]
  \item Each element of $\mathcal{C}$ has cardinality $|Z(G)|$.
  \item The join of any subset of $\mathcal{C}$ in $\mathcal{R}(G)$ is a union of some of the elements of $\mathcal{C}$.
  \item For any two distinct elements $a$ and $b$ in $G$ that are lying in the same element of $\mathcal{C}$, there exists a permutation $\pi$ of $G$ interchanging $a$ with $b$, also interchanging $K_G(a)$ with $K_G(b)$ as sets, and fixing the rest of the elements of $G$, so that $\pi$ induces an automorphism of $\mathcal{R}(G)$ preserving the partition $\mathcal{C}$.
  \end{enumerate}
  By Lemma~\ref{lem:int1} we know that the partition of $G$ into the cosets of $Z:=Z(G)$ satisfies those above conditions. In particular, if $a$ and $az$ are two elements lying in the same coset of $Z$, then the permutation $\pi$ which swaps the pairs $gag^{-1}$ and $gag^{-1}z$ for every $g\in G$ and fixes the rest of the elements woulds yield an automorphism of $\mathcal{R}(G)$.
  
  Let $f\colon \mathcal{C}\to G/Z$ be a bijection having the property that for each $C\in \mathcal{C}$ the intersection $C\cap f(C)$ is non-empty. Let $\{a_1Z,\dots,a_kZ\}$ be a subset of $G/Z$ and let $C_{a_i} := f^{-1}(a_iZ)$, $1\leq i\leq k$. Here we require the transversal elements are chosen in a way that $a_i$ lies in $C_{a_i}$ for each $1\leq i\leq k$. Now, the join of the subracks $C_{a_1},\dots,C_{a_k}$ in $\mathcal{R}(G)$ is the union of the elements of $\mathcal{C}$ containing $\langle a_1,\dots,a_k \rangle_{\mathsf{rk}}$. To see this, observe that for any pair $a_i'\in C_{a_i}$ and $a_j'\in C_{a_j}$ there exists an automorphism of $\mathcal{R}(G)$ fixing $\langle C_{a_i},C_{a_j}\rangle_{\mathsf{rk}}$ that is induced by a permutation of $G$ mapping $a_i$ to $a_i'$ and $a_j$ to $a_j'$. Then, by Lemma~\ref{lem:coset}, the map $f$ induces an isomorphism from $\mathsf{J}(\mathcal{C})$ to ${\mathcal{R}(G/Z(G))\setminus \{\emptyset\}}$.

  Since $\mathsf{J}(\mathcal{C})$ is defined purely by the combinatorial properties of $\mathcal{R}(G)$ and since $\mathcal{R}(G)\cong \mathcal{R}(H)$ by assumption, we see that $\mathsf{J}(\mathcal{C})$ and ${\mathcal{R}(H/Z(H))\setminus \{\emptyset\}}$ are isomorphic posets. As we know, the nilpotence class of the group is equal to the length of the upper central series and by iterating the above process one would get a series of posets whose $k$th term isomorphic to $\mathcal{R}(G/Z_k)\setminus \{\emptyset\}$, where $Z_k$ is the $k$th center of $G$. By assumption $G$ is nilpotent of class $c$; thus, the series stabilizes when $k = c$ and the corresponding poset would be a singleton. Since this must be the case for the group $H$ as well, we conclude that $H$ is a nilpotent group with nilpotence class $c$.
\end{proof}

\section{$p$-Nilpotence of the group}
\label{sec:nor}

For a finite rack $R$, by axiom (A2), the map $\phi_a\colon a\mapsto a\triangleright b$ defines a permutation of $R$. Moreover, by axiom (A1), the map $\phi_a$ is an automorphism of $R$. Let $\Phi\colon R\to S_R$ be the map taking $a\in R$ to $\phi_a\in S_R$, where $S_R$ is the group of permutations of the set $R$. The \emph{inner automorphism group} $\mathsf{Inn}(R)$ of $R$ is the subgroup of $S_R$ generated by the elements of $\Phi(R)$. Observe that $\mathsf{Inn}(R)$ and $\Phi(R)$ coincides if $R=G$ is a finite group.

\begin{lem}
  Let $G$ be a finite group. The following statements hold.
  \begin{enumerate}
  \item The map $\Phi\colon G\to \Phi(G)$ is a group homomorphism whose kernel is $Z(G)$.
  \item The map $\Phi\colon G\to \Phi(G)$ is a rack homomorphism.
  \end{enumerate}
\end{lem}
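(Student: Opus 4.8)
The plan is to prove each of the two statements essentially by unravelling definitions, since both are routine once the conjugation operation is written out explicitly.

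First I would prove statement~1, that $\Phi\colon G\to \Phi(G)$ is a group homomorphism with kernel $Z(G)$. The point is that for $a,b\in G$ the permutation $\phi_a$ is precisely conjugation by $a$, i.e.\ $\phi_a(x) = a\triangleright x = axa^{-1}$ for all $x\in G$. Composing, $(\phi_a\circ\phi_b)(x) = a(bxb^{-1})a^{-1} = (ab)x(ab)^{-1} = \phi_{ab}(x)$, so $\Phi(ab) = \phi_{ab} = \phi_a\phi_b = \Phi(a)\Phi(b)$, which gives the homomorphism property. For the kernel, $\Phi(a)$ is the identity permutation exactly when $axa^{-1} = x$ for all $x\in G$, i.e.\ exactly when $a\in Z(G)$; hence $\ker\Phi = Z(G)$. (This also recovers the familiar fact that $\Phi(G)\cong G/Z(G)$ is the inner automorphism group in the usual group-theoretic sense, matching the remark preceding the lemma that $\mathsf{Inn}(G)=\Phi(G)$.)

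Next I would prove statement~2, that $\Phi$ is a rack homomorphism, i.e.\ that $\Phi(a\triangleright b) = \Phi(a)\triangleright\Phi(b)$ where the operation $\triangleright$ on the right is the conjugation operation in the group $\Phi(G)$ (so $\phi\triangleright\psi = \phi\psi\phi^{-1}$). Using statement~1 and the fact that $\Phi$ is a group homomorphism, $\Phi(a\triangleright b) = \Phi(aba^{-1}) = \Phi(a)\Phi(b)\Phi(a)^{-1} = \Phi(a)\triangleright\Phi(b)$, which is exactly what is required. Equivalently, one can observe directly that $\phi_{a\triangleright b} = \phi_a\circ\phi_b\circ\phi_a^{-1}$ by checking both sides agree on every $x\in G$: $\phi_{aba^{-1}}(x) = (aba^{-1})x(aba^{-1})^{-1}$, while $(\phi_a\phi_b\phi_a^{-1})(x) = a\,b\,(a^{-1}xa)\,b^{-1}\,a^{-1}$, and these coincide.

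There is no real obstacle here; the only thing to be careful about is the implicit but standard fact that a group homomorphism between groups is automatically a rack homomorphism when both groups are given the conjugation rack structure, so statement~2 is in fact an immediate consequence of statement~1 together with $\Phi$ being a group homomorphism. I would present statement~1 in full and then derive statement~2 in one line from it.
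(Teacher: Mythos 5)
Your proof is correct. The paper itself gives no argument for this lemma --- it only cites \cite[Proposition~2]{Kay20} --- and your direct verification (that $\phi_a$ is conjugation by $a$, that $\phi_a\phi_b=\phi_{ab}$ with kernel $Z(G)$, and that statement~2 follows since any group homomorphism respects the conjugation rack structure) is exactly the standard computation that reference supplies.
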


\begin{proof}
  See the proof of \cite[Proposition~2]{Kay20}.
\end{proof}

Let $G$ be a centerless group. By the above Lemma the groups $G$ and $\Phi(G)$ are isomorphic. Given the subrack lattice $\mathcal{R}(G)$ we want to estimate the orders of the elements of $\Phi(G)$. Let $S, T$ be subracks of the group $G$. As a notational shorthand, we introduce the set
$$ B_G(S,T) := \langle S\cup T \rangle_{\mathsf{rk}} \cap K_G(T). $$
If $S = \{a\}$ (or $T = \{g\}$) is a singleton, for simplicity we use $B_G(a,T) := B_G(S,T)$ (or $B_G(S,g) := B_G(S,T)$). Notice that knowledge of the subrack lattice is enough to determine $B_G(S,T)$ for any pair of subracks $S$ and $T$ of $G$.

For an element $\phi_a\in \Phi(G)$, the \emph{hypothetical pseudo cycle form} $[\phi_a']$ of $\phi_a$ is the partition of $G$ formed by taking the differences of the sets $B_G(a,g)$, $g\in G$. (To be more precise, $[\phi_a']$ is the partition of the set $L$ of labels of the atoms of $\mathcal{R}(G)$ so as to be associated with the group elements.) We call each element of this partition a \emph{part} of $[\phi_a']$. Notice that any part of $[\phi_a']$ is the union of the underlying sets of some of the cycles in the cycle decomposition of $\phi_a$. Imitating the cycle decomposition of $\phi_a$, we represent $[\phi_a']$ by writing down each of its parts side by side within brackets. If $[g_1,\dots,g_s]$ is a part of $[\phi_a']$ such that $[\emptyset,\{g_1,\dots,g_s\}]_{\mathcal{R}(G)}$ is a Boolean algebra in $\mathcal{R}(G)$, we mark it with double brackets as $[[g_1,\dots,g_s]]$. Clearly, any part of $[\phi_a']$ which is a singleton would be marked with double brackets. More generally, if we know that the cardinality of a part of $[\phi_a']$ is the length of the corresponding cycle in $\phi_a$, we call it a \emph{cycle part} of $[\phi_a']$ and mark it with double brackets.

\begin{example}
  Given the subrack lattice of the symmetric group $S_3$, let us estimate the orders of the elements of $S_3$ corresponding to the atoms of the subrack lattice. Let $L = \{a,b,c,d,e,f\}$ be the set of labels of the atoms of $\mathcal{R}(S_3)$ so that the conjugacy classes of $L=S_3$ are
  $$  \{a\},\quad \{b,c,d\},\; \text{ and }\; \{e,f\}. $$
  At this point we can easily determine which element of $S_3$ has what order, since up to isomorphism $S_3$ is the only non-abelian group of order six. Let us ignore this fact and proceed systematically. Clearly, the element $a$ is the identity element of the group. Checking the subrack lattice $\mathcal{R}(S_3)$ we see that $B_G(b,b) = \{b\}$, $B_G(b,c) = B_G(b,d) = \{b,c,d\}$, and $B_G(b,e) = B_G(b,f) = \{e,f\}$, so that
  $$ [\phi_b'] = [[a]]\,[[b]]\,[[c,d]]\,[[e,f]].  $$
  Similarly, we can determine $[\phi_c']$ and $[\phi_d']$. To determine $[\phi_e']$ and $[\phi_f']$, by checking $\mathcal{R}(S_3)$, we observe that $e$ and $f$ commutes and
  $$ B_G(e,b) = B_G(f,b) = B_G(e,c) = B_G(f,c) = B_G(e,d) = B_G(f,d) = \{b,c,d\}. $$
  Since there must exist at least two elements of $S_3$ of order three and since $e$ and $f$ are the only possibilities, we see that
  $$ [\phi_e'] = [\phi_f'] = [[a]]\,[[b,c,d]]\,[[e]]\,[[f]].  $$
Notice that we were unable to distinguish the elements $e$ and $f$ from each other, but we could determine their orders.
\end{example}

In view of Question~\ref{q1} we may ask the following

\begin{question1p}
  For a centerless finite group $G$, by using only the knowledge of the subrack lattice $\mathcal{R}(G)$, can we further refine the process of constructing the cycle forms of the elements of $\Phi(G)$ such that for any element of $\phi_a\in \Phi(G)$ each part of $[\phi_a']$ is a cycle part?
\end{question1p}

For a finite set $S$ we define a partial order relation $\leq$ on the set $\mathcal{P}(S)$ of partitions of $S$ in the following way. Given two elements $[\alpha]$ and $[\beta]$ of $\mathcal{P}(S)$, we say $[\beta] \leq [\alpha]$ if elements of $[\alpha]$ are the unions of some of the elements of $[\beta]$ as sets.

\begin{lem}
  Let $G$ be a finite group whose center is trivial and let $a$ be an element of~$G$. Then the set 
  $$ A_G(a) := \{x\in G\colon [\phi_x']\leq [\phi_a']\} $$
  is an abelian subgroup of $G$.
\end{lem}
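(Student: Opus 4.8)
The plan is to unpack the definition of $A_G(a)$ and show, in two stages, that it is closed under the group operation and under inverses, and that any two of its elements commute (commutativity then follows formally, and closure plus the finiteness of $G$ gives a subgroup). The key observation I would start from is a reinterpretation of the condition $[\phi_x']\le[\phi_a']$ in terms of the sets $B_G(x,g)$ and $B_G(a,g)$: since each $[\phi_x']$ is built from the differences of the sets $B_G(x,g)$ over $g\in G$, the relation $[\phi_x']\le[\phi_a']$ says precisely that the partition cutting $G$ into $B_G(a,\cdot)$-atoms refines the one cutting it into $B_G(x,\cdot)$-atoms; equivalently, every conjugacy class moved by $x$ lies inside a single part of $[\phi_a']$, and — crucially — two conjugacy classes that are ``linked'' through $x$ (i.e.\ lie in a common part of $[\phi_x']$) are also linked through $a$. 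In the cleanest form: the orbits of $\langle\phi_a\rangle$ acting on $G$, coarsened to ``pseudo-orbits'' as in the definition, refine nothing finer than the pseudo-orbits of $\langle\phi_x\rangle$.

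First I would show the set is closed and commutative via the centralizer. The natural guess is that $A_G(a)$ is contained in $C_G(a)$, or even equals a piece of it: if $[\phi_x']\le[\phi_a']$ then in particular $a$ lies in a singleton part of $[\phi_x']$ whenever $a$ lies in a singleton part of $[\phi_a']$ — but $\phi_a(a)=a$ always, so $\{a\}$ is a part of $[\phi_a']$, forcing $\{a\}$ to be a part of $[\phi_x']$, i.e.\ $\phi_x$ fixes $a$, i.e.\ $xax^{-1}=a$, so $x\in C_G(a)$. By symmetry of the roles one does \emph{not} immediately get $a\in C_G(x)$ from the hypothesis (the relation $\le$ is not symmetric), but $x\in C_G(a)$ already gives $[x,a]=1$. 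Now for two elements $x,y\in A_G(a)$ I would argue they commute: both fix $a$, and I would try to show that the hypothesis forces $\langle x\rangle$ and $\langle y\rangle$ to sit inside a common abelian subgroup — concretely, that the pseudo-cycle condition propagates commutativity, using the Boolean-interval characterization of abelianness recorded just before Lemma~\ref{lem:coset} (two atoms commute iff the join of the corresponding atoms covers nothing else). The point is that membership in $A_G(a)$ pins $x$ down to live inside the structure controlled by $\phi_a$, and $\phi_a$ being a single permutation, the elements whose pseudo-cycle forms refine its own cannot ``spread out'' into non-commuting directions.

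I expect the main obstacle to be exactly this last point: turning the combinatorial refinement $[\phi_x']\le[\phi_a']$ into the group-theoretic statement that $x$ and $y$ commute. The subtlety is that $[\phi_a']$ is only a \emph{coarsening} of the true cycle form of $\phi_a$, so one cannot read off $\langle\phi_a\rangle$-orbit structure directly; one has to work with the $B_G$-sets and the lattice joins, not with $\phi_a$ as an abstract permutation. The likely route is: (1) show that if $[\phi_x']\le[\phi_a']$ then $K_G(x)$ is contained in a single part $P$ of $[\phi_a']$ and $x$ normalizes the setwise structure of that part; (2) show $P$, together with $\{a\}$ and the ambient lattice relations, forces $\langle a,x\rangle_{\mathsf{rk}}$ to generate an abelian subgroup — here one invokes that $x\in C_G(a)$ and that the relevant join in $\mathcal R(G)$ covers only the expected elements; (3) apply (2) to the pair $x,y$ by noting $[\phi_x']\le[\phi_a']$ and $[\phi_y']\le[\phi_a']$ put both in the same abelian pocket. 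Once commutativity is in hand, closure is automatic: if $x,y\in C_G(a)$ commute and both satisfy the refinement condition, then $xy$ fixes $a$ and its pseudo-cycle form — being governed by the abelian group $\langle x,y\rangle$ — refines $[\phi_a']$ as well, so $xy\in A_G(a)$; finiteness upgrades this to a subgroup, which is abelian by construction.
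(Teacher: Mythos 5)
Your proposal has a genuine gap at exactly the point you flag as the main obstacle: commutativity. From $[\phi_x']\leq[\phi_a']$ you extract only that $\{a\}$ is a singleton part of $[\phi_x']$, i.e.\ $x\in C_G(a)$; but two elements of $C_G(a)$ need not commute, and your proposed repair via ``abelian pockets'' and the Boolean-interval criterion is not carried out and would at best show again that $x$ and $y$ each commute with $a$. The observation you are missing is that \emph{every} element $g$ of $C_G(a)$, not just $a$ itself, forms a singleton part of $[\phi_a']$: since $a$ and $g$ commute, $\langle \{a\}\cup\{g\}\rangle_{\mathsf{rk}}=\{a,g\}$, so the part containing $g$ lies in $B_G(a,g)\setminus B_G(a,a)\subseteq\{g\}$; and a singleton part must be a fixed point because every part is a union of underlying sets of cycles. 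Hence $[\phi_x']\leq[\phi_a']$ forces $\mathrm{Fix}(\phi_a)\subseteq\mathrm{Fix}(\phi_x)$, i.e.\ $C_G(a)\subseteq C_G(x)$, i.e.\ $x\in C_G(C_G(a))=Z(C_G(a))$. Since $Z(C_G(a))$ is abelian, any two elements of $A_G(a)$ commute. This is precisely the paper's (terse) argument that ``the set of fixed points of $\phi_x$ \dots\ contains the set of fixed points of $\phi_a$.'' (Note also that your sentence about which partition refines which is stated backwards relative to the paper's convention for $[\beta]\leq[\alpha]$, although your subsequent uses go in the correct direction.)

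The second gap is closure under multiplication, which you dismiss as ``automatic'' once commutativity is known. It is not: $[\phi_{xy}']$ is built from the sets $B_G(xy,g)=\langle \{xy\}\cup\{g\}\rangle_{\mathsf{rk}}\cap K_G(g)$, and the rack generated by $xy$ and $g$ is not controlled in any obvious way by the racks generated by $x,g$ and by $y,g$; ``governed by the abelian group $\langle x,y\rangle$'' is an assertion, not an argument. The paper reduces the problem to showing that $B_G(\langle x,y\rangle,g)\setminus B_G(\langle x,y\rangle,h)$ is contained in $B_G(a,g)\setminus B_G(a,h)$ for all $g,h$, and proves this by constructing $B_G(\langle x,y\rangle,g)$ iteratively as a union of $\langle\phi_x\rangle$- and $\langle\phi_y\rangle$-orbits, each of which stays inside a single part of $[\phi_a']$ by the hypothesis on $[\phi_x']$ and $[\phi_y']$. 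Some argument of this kind is required, and your proposal does not supply one; closure under inverses (via $[\phi_{x}']=[\phi_{x^{-1}}']$, or via finiteness) is the only part that really is automatic.
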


\begin{proof}
  Let $x,y\in A_G(a)$. Since the set of fixed points of $\phi_x$ as well as the set of fixed points of $\phi_y$ contains the set of fixed points of $\phi_a$ as a subset, we see that the elements $x$ and $y$ are commuting.

  Next, we shall show that elements of $A_G(a)$ form a subgroup of $G$. Obviously $[\phi_x] = [\phi_{x^{-1}}]$; thus, $A_G(a)$ is closed under taking inverses. To show that $xy\in A_G(a)$ it is enough to prove the following statement. For every $g,h \in G$ the set $B_G(\langle x,y\rangle, g)\setminus B_G(\langle x,y \rangle, h)$ is contained by $B_G(a,g)\setminus B_G(a,h)$.

  Consider the union of $\langle\phi_x\rangle$-orbits of the elements of $B_G(y,g)\setminus B_G(y,h)$. Observe that this set is contained by $B_G(a,g)\setminus B_G(a,h)$ since $[\phi_y']$ and $[\phi_x']$ are less than $[\phi_a']$ by assumption.  Now the statement holds; because, the set $B_G(\langle x,y \rangle, g)\setminus B_G(\langle x,y \rangle, h)$ can be constructed by forming unions of $\langle \phi_x\rangle$-, $\langle \phi_y\rangle$-, or $\langle \phi_{g'}\rangle$-orbits of the base set, where $g'$ is an element of the base set, iteratively.
\end{proof}

We call $A_G(a)$ the \emph{abelian subgroup associated with $a$}. Notice that $A_G(a)$ is a subgroup of the center of the centralizer of $a$ and the order of $a$ (hence, the least common multiple of the cycle lengths of $\phi_a$) divides the order of $A_G(a)$.

It is possible to refine the hypothetical pseudo cycle form $[\phi_a']$ further into subparts. If $a$ and $x$ are two commuting elements of $G$ (if $x$ lies in the centralizer of $a$) and if $F$ is the set of fixed points of $\phi_x$ that are lying in a part $P$ of $[\phi_a']$, then the set $F$ is preserved by $\phi_a$ as well. In consequence, the sets $P\setminus F$ and $F$ should be declared as parts of the cycle form of $\phi_a$. We can justify this assertion in the following way. Let $g \in F$, so that $\phi_x(g) = g$ and $\phi_a(g)\neq g$. If $F$ is not preserved by $\phi_a$, then we may assume $\phi_a(g)\notin F$. Accordingly $\phi_a(g)$ must necessarily be moved by $\phi_x$ and this implies  $\phi_{xa}(g) \neq \phi_a(g)$. On the other hand, $\phi_{ax}(g) = \phi_a(\phi_x(g)) = \phi_a(g)$ which yields a contradiction as $\phi_{xa} = \phi_{ax}$. We shall remark that the cardinality of $F$ must be greater than 1. As otherwise, $\phi_{xa}(g)$ and $\phi_a(g)$ would be two distinct elements.

We define the \emph{hypothetical cycle form} $[\phi_a]$ of $\phi_a$ as the partition of $G$ derived from $[\phi_a']$ by applying the last step explained in the previous paragraph for every $x$ lying in the centralizer of $a$ and for every part $P$ of $[\phi_a']$. 

\begin{lem}\label{lem:order}
  Let $G$ be a finite group  whose center is trivial and let $u$ and $v$ be two elements of $G$. Then $[\phi_v']\leq [\phi_u']$ implies $[\phi_v]\leq [\phi_u]$.
\end{lem}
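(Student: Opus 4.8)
The plan is to show that the passage from the hypothetical pseudo cycle form to the hypothetical cycle form is \emph{monotone} with respect to the refinement order $\leq$ on partitions of $G$. Recall that $[\phi_u]$ is obtained from $[\phi_u']$ by the following operation: for every $x$ in the centralizer $C_G(u)$ and every part $P$ of $[\phi_u']$, if $F$ denotes the set of fixed points of $\phi_x$ lying inside $P$, then $P$ is split into the two pieces $F$ and $P\setminus F$. So the whole refinement is determined by the data consisting of (a) the partition $[\phi_u']$ itself, and (b) the collection of fixed-point sets $\mathrm{Fix}(\phi_x)$ for $x$ ranging over $C_G(u)$. I would prove the lemma by controlling both of these ingredients under the hypothesis $[\phi_v']\leq[\phi_u']$.

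First I would record two consequences of the hypothesis. Since $[\phi_v']\leq[\phi_u']$, each part of $[\phi_u']$ is a union of parts of $[\phi_v']$; in particular $[\phi_v']$ already refines $[\phi_u']$, so the only thing that can go wrong is that the extra splitting steps used to build $[\phi_u]$ from $[\phi_u']$ fail to be respected by $[\phi_v]$. Second, the fixed-point set of $\phi_u$ is a union of parts of $[\phi_v']$ (it is itself a part of $[\phi_u']$, namely the union of the singleton-type parts), and from $[\phi_v']\leq[\phi_u']$ together with the structure of these forms one gets $\mathrm{Fix}(\phi_u)\subseteq\mathrm{Fix}(\phi_v)$, hence $C_G(u)\subseteq C_G(v)$: every $x$ commuting with $u$ also commutes with $v$. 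This is the key containment, and it says that the \emph{index set} over which we perform splittings to form $[\phi_v]$ is at least as large as the one used for $[\phi_u]$.

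Now I would combine these. Fix a part $P$ of $[\phi_u']$ and an $x\in C_G(u)$, and let $F=\mathrm{Fix}(\phi_x)\cap P$ be the corresponding split piece of $[\phi_u]$. Because $[\phi_v']\leq[\phi_u']$, the part $P$ is a disjoint union of parts $Q_1,\dots,Q_m$ of $[\phi_v']$. Since $x\in C_G(u)\subseteq C_G(v)$, the set $x$ is also used in the construction of $[\phi_v]$: for each $Q_i$, the piece $\mathrm{Fix}(\phi_x)\cap Q_i$ is split off as a part of $[\phi_v]$, and its complement $Q_i\setminus\mathrm{Fix}(\phi_x)$ as well. Taking unions over $i$, both $F=\bigcup_i(\mathrm{Fix}(\phi_x)\cap Q_i)$ and $P\setminus F=\bigcup_i(Q_i\setminus\mathrm{Fix}(\phi_x))$ are unions of parts of $[\phi_v]$. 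Since $[\phi_u]$ is generated (as a common refinement) by $[\phi_u']$ together with all such pieces $F$, and each of $[\phi_u']$ and each such $F$ is a union of parts of $[\phi_v]$, it follows that every part of $[\phi_u]$ is a union of parts of $[\phi_v]$, i.e. $[\phi_v]\leq[\phi_u]$.

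The main obstacle I anticipate is making the containment $C_G(u)\subseteq C_G(v)$ — equivalently $\mathrm{Fix}(\phi_u)\subseteq\mathrm{Fix}(\phi_v)$ — genuinely rigorous from the combinatorial hypothesis $[\phi_v']\leq[\phi_u']$, together with the subtler point that a fixed point of $\phi_u$ is not merely a part of $[\phi_u']$ but is \emph{known} to be fixed (this is exactly the distinction between a cycle part and an ordinary part that the paper is careful about). I would argue it as follows: the union of all double-bracketed singleton-type parts of $[\phi_u']$ is precisely $\mathrm{Fix}(\phi_u)$ by definition of the $B_G(a,g)$ construction, since $g\in\mathrm{Fix}(\phi_u)$ exactly when $g$ lies in $B_G(u,g)$ as an isolated point; and $[\phi_v']\leq[\phi_u']$ forces these parts to decompose into parts of $[\phi_v']$ which, for dimension/commutation reasons already exploited in the proof of the preceding lemma, must again be fixed points of $\phi_v$. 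Once this is in place the rest is the bookkeeping above, carried out at the level of generated refinements, and no further delicate argument is needed.
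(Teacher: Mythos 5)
Your proposal follows essentially the same route as the paper's proof: both reduce the claim to the containment $C_G(u)\subseteq C_G(v)$ (equivalently $\mathrm{Fix}(\phi_u)\subseteq\mathrm{Fix}(\phi_v)$) obtained from $[\phi_v']\leq[\phi_u']$, and then observe that each splitting of a part $P$ of $[\phi_u']$ into $F$ and $P\setminus F$ by some $x\in C_G(u)$ is reproduced on every part of $[\phi_v']$ inside $P$ when forming $[\phi_v]$. Your write-up is correct; it is in fact more explicit than the paper, which asserts the centralizer containment with only the phrase ``by assumption,'' whereas you spell out why the singleton-type parts force it.
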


\begin{proof}
  By assumption an element $w\in G$ lying in the centralizer of $u$, also lies in the centralizer of $v$. Let $P_u$ be a part of $[\phi_u']$ and $F$ be the subset of $P_u$ whose elements are fixed by $w$, so that $P_u\setminus F$ and $F$ are preserved by $\phi_u$. Let $P_v$ be a part of $[\phi_v']$ contained by $P_u$. Obviously, the sets $P_v\setminus F$ and $F\cap P_v$ are preserved by $\phi_v$.
\end{proof}

As an immediate application of Lemma~\ref{lem:order} we may analyze the relations between the elements of $A_G(a)$ via the hypothetical cycle forms of the corresponding inner automorphisms instead of the pseudo cycle forms.

\begin{lem}
  Let $G$ be a finite centerless group and $a$ be an element of $G$. Let $x$ be an element of $A_G(a)$ and $P_1 := [g_1,\dots,g_s]$ be a part of $[\phi_x]$. If $P_1,\dots, P_r$ are the parts of $[\phi_x]$ whose union $P:=\bigcup_{i=1}^rP_i$ is the part of $[\phi_a]$ containing $g_1$, then each of the cycles of $\phi_x$ whose underlying set is a subset of $P$ has the same length $k$, so that $k$ divides $s$.
\end{lem}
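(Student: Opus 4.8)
The plan is to read off, directly from the definition of the hypothetical cycle form, what it means for a part $P$ of $[\phi_a]$ to be ``atomic''. Recall that $[\phi_a]$ is obtained from $[\phi_a']$ by splitting, for every $y$ in the centralizer $C_G(a)$ of $a$ and for every part, that part into its subset of points fixed by $\phi_y$ and the complement. Since $A_G(a)$ is a subgroup of $C_G(a)$ (indeed of the center of $C_G(a)$, as already noted) and $x\in A_G(a)$, every power $x^j$ lies in $C_G(a)$, so each permutation $\phi_{x^j}=\phi_x^{\,j}$ is among the maps used in this refinement. The statement I would aim to extract is the following dichotomy: for every $j\ge 1$, either every element of $P$ is fixed by $\phi_x^{\,j}$, or none is. Granting the dichotomy, the lemma follows in a few lines.

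For the dichotomy, I would first record that $[\phi_a']$ places every point fixed by $\phi_a$ into a singleton part: if $ag=ga$ then $\langle a,g\rangle_{\mathsf{rk}}=\{a,g\}$, hence $B_G(a,g)\subseteq\{a,g\}$, and since $g\in B_G(a,g)$ while $B_G(a,a)=\{a\}$ distinguishes $a$ from $g$, the sets $B_G(a,\cdot)$ separate $g$ from everything else. Consequently a part $P$ of $[\phi_a]$ with $|P|>1$ consists entirely of elements moved by $\phi_a$ and lies inside a part $Q$ of $[\phi_a']$ on which $\phi_a$ has no fixed point (the case $|P|=1$ being trivial). Fix $j$. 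As $\phi_{x^j}$ commutes with $\phi_a$, the set of points of $Q$ fixed by $\phi_{x^j}$ is $\phi_a$-invariant, hence a union of $\phi_a$-cycles of length $\ge 2$, and likewise for its complement in $Q$; so the refinement of $Q$ by the $\phi_{x^j}$-fixed points is a genuine split into parts of size $\ge 2$ (whenever it is nontrivial), and since $P$ is an atom of the fully refined partition $[\phi_a]$, it lies entirely inside one of the two pieces. This is exactly the dichotomy.

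It remains to deduce the lemma. For $g\in P$ let $\ell(g)$ be the length of the $\phi_x$-cycle through $g$, so that $g$ is fixed by $\phi_x^{\,j}$ precisely when $\ell(g)\mid j$. By the dichotomy, for each $j$ the condition $\ell(g)\mid j$ holds either for all $g\in P$ or for none, so $\{\,j\ge 1:\ell(g)\mid j\,\}$ is a single set independent of $g\in P$; as $\ell(g)$ is the least element of that set, $\ell$ is constant on $P$, with common value $k$, say. Since $P=\bigcup_i P_i$ is a union of parts of $[\phi_x]$ it is $\phi_x$-invariant, so every $\phi_x$-cycle meeting $P$ lies in $P$ and hence has length $k$; in particular $P_1$, being a part of $[\phi_x]$ and therefore a union of underlying sets of $\phi_x$-cycles, is a union of cycles of length $k$, whence $k\mid|P_1|=s$. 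I expect the genuine content to be the dichotomy of the second paragraph; once that is available, the rest is elementary bookkeeping with the cycle structure of the single permutation $\phi_x$.
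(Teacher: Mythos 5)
Your proposal is correct and follows essentially the same route as the paper: the paper's proof is exactly the observation that two $\phi_x$-cycles in $P$ of different lengths would make the fixed-point set of some power $\phi_x^{\,j}$ (with $x^j\in C_G(a)$) cut $P$ into a proper nonempty piece, contradicting that $P$ is a part of $[\phi_a]$ --- which is the contrapositive of your dichotomy. Your version just makes explicit the bookkeeping (constancy of the cycle length on $P$, $P_1$ being a union of $\phi_x$-cycles, hence $k\mid s$) that the paper leaves implicit.
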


\begin{proof}
  Observe that if there are two cycles of $\phi_x$ contained by $P$ whose lengths are different, then for some non-identity power of $\phi_x$ a proper subset of $P$ would be a fixed point set, so that $P$ cannot be a part of $[\phi_a]$ contrary to the assumption. Hence, any two cycles of $\phi_x$ contained by $P$ would have the same length, say $k$. 
\end{proof}

Consider the hypercenter $Z_{\infty} := Z_{\infty}(G)$ of the finite group $G$. By the proof of Theorem~\ref{thm:nil} we know that given the subrack lattice $\mathcal{R}(G)$ of $G$ it is possible to determine an isomorphic copy of $\mathcal{R}(G/Z_{\infty})$. We say $\mathcal{R}(G)$ satisfies the \emph{cycle form condition} if for any pair of elements $a,b$ of $G/Z_{\infty}$ such that $[\phi_a] = [\phi_b]$ there exists a permutation of $G/Z_{\infty}$ interchanging $a$ with $b$, also interchanging $K_{G/Z_{\infty}}(a)$ with $K_{G/Z_{\infty}}(a)$ as sets, and fixing the rest of the elements of $G/Z_{\infty}$ that induces an automorphism of $\mathcal{R}(G/Z_{\infty})$. Now, we can state and prove the main result of this section.

\begin{thm}\label{thm:pnil}
  If $G$ is a $p$-nilpotent group whose subrack lattice satisfies the cycle form condition, and if $H$ is a group with $\mathcal{R}(G)\cong \mathcal{R}(H)$, then $H$ is $p$-nilpotent.
\end{thm}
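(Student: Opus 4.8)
The plan is to pass to the centerless quotient $G/Z_{\infty}$, recover the orders of its elements from its (lattice-theoretically reconstructible) subrack lattice with the help of the cycle form condition, and then characterise $p$-nilpotence as the statement that the number of $p$-regular elements is as small as it can be.

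First I would invoke the construction from the proof of Theorem~\ref{thm:nil}: the isomorphism type of $\mathcal{R}(G)$ determines an isomorphic copy of $\mathcal{R}(G/Z_{\infty})\setminus\{\emptyset\}$, purely combinatorially. Put $\Gamma:=G/Z_{\infty}$; it is centerless, and as a quotient of the $p$-nilpotent group $G$ it is again $p$-nilpotent. Likewise $\Gamma_H:=H/Z_{\infty}(H)$ is centerless, $\mathcal{R}(\Gamma_H)\cong\mathcal{R}(\Gamma)$, and since the cycle form condition is stated solely in terms of hypothetical cycle forms, conjugacy classes, and automorphisms of $\mathcal{R}(\Gamma)$ --- all of which a lattice isomorphism respects --- it transfers, so $\mathcal{R}(\Gamma_H)$ satisfies it too.

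The heart of the matter is to show that, for a centerless finite group $\Gamma$ whose subrack lattice meets the cycle form condition, the isomorphism type of $\mathcal{R}(\Gamma)$ determines $\mathrm{ord}(a)$ for every atom $a$. I would establish this by proving that every part of the hypothetical cycle form $[\phi_a]$ is in fact a cycle part (an affirmative answer to Question~$1'$ for such $\Gamma$). By the refinement of $[\phi_a']$ into $[\phi_a]$ via centralising elements, together with the associated abelian subgroups $A_{\Gamma}(x)$ and Lemma~\ref{lem:order}, all cycles of $\phi_a$ lying inside a fixed part of $[\phi_a]$ already share one length $k$, so such a part is a union of equally long cycles; the extra automorphisms furnished by the cycle form condition --- those interchanging two atoms with the same hypothetical cycle form together with their conjugacy classes --- then rule out the case of more than one such cycle, since otherwise one could exhibit two atoms whose lattice-theoretic invariants coincide yet whose true orders differ. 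Granting this, $\mathrm{ord}(a)=\mathrm{ord}(\phi_a)$ (as $\Gamma$ is centerless) is the least common multiple of the cardinalities of the parts of $[\phi_a]$, so ``the order of an atom'' is a combinatorial invariant of $\mathcal{R}(\Gamma)$. This is the step I expect to be the main obstacle: it is precisely where the hypothesis is used in an essential way, and the exclusion argument --- ruling out accidental coincidences of cycle forms --- is the delicate point.

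With orders in hand, let $\Pi$ be the set of $p$-regular elements of $\Gamma$; it is a union of conjugacy classes, hence a closed subrack by Lemma~\ref{lem:3.2}, and $|\Pi|$ is a combinatorial invariant. Since $\Gamma$ is $p$-nilpotent, $\Pi$ is exactly its normal $p$-complement, so $|\Pi|=|\Gamma|_{p'}$. Transporting this invariant through $\mathcal{R}(\Gamma)\cong\mathcal{R}(\Gamma_H)$ and using that $\Gamma_H$ has the same number of atoms as $\Gamma$, the set $\Pi_H$ of $p$-regular elements of $\Gamma_H$ has $|\Pi_H|=|\Gamma_H|_{p'}$. But $\Pi_H=\{x\in\Gamma_H:x^{m}=1\}$ with $m:=|\Gamma_H|_{p'}$; by Frobenius' theorem $|\Pi_H|$ is divisible by $m$, and since it attains this minimum the set $\Pi_H$ must be a subgroup --- automatically normal, being conjugation-invariant, and a Hall $p'$-subgroup --- so $\Gamma_H=H/Z_{\infty}(H)$ is $p$-nilpotent. (For a more lattice-theoretic verification that $\Pi_H$ is a subgroup one can instead use Lemma~\ref{lem:3.4} to pin down the normal maximal subgroups of $\Gamma_H$ lying above $\Pi_H$.) Finally I would climb back up the upper central series of $H$: if $A\le Z(B)$ and $B/A$ is $p$-nilpotent then so is $B$ --- writing $A=A_p\times A_{p'}$, the preimage in $B$ of a normal $p$-complement of $B/A$, taken modulo $A_{p'}$, has the central $p$-group $A_p$ as its Sylow $p$-subgroup, hence by Schur--Zassenhaus splits off a characteristic complement whose preimage is a normal $p$-complement of $B$. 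Applying this to the quotients $H/Z_{k}(H)$ in turn, beginning from $H/Z_{\infty}(H)$, gives that $H$ itself is $p$-nilpotent.
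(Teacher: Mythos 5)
Your reduction to $\Gamma=G/Z_{\infty}(G)$, the transfer of the cycle form condition across the lattice isomorphism, and the climb back up the upper central series via Schur--Zassenhaus all match the paper's skeleton (the paper asserts the last reduction with less detail). Your endgame is genuinely different: you count $p$-regular elements and invoke Frobenius's theorem to conclude that a conjugation-invariant set of exactly $|\Gamma_H|_{p'}$ solutions of $x^{m}=1$ is a normal Hall $p'$-subgroup, whereas the paper locates a subrack corresponding to a Sylow $p$-subgroup $P$ of $\widetilde{G}$ (using $\mathsf{M}(\mathcal{R}(\widetilde{G}))$ and Lemma~\ref{lem:3.4}) and applies the fusion criterion that $\widetilde{G}$ is $p$-nilpotent if and only if every conjugacy class of $P$ is a conjugacy class of $\widetilde{G}$. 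Your version is arguably cleaner, and it needs strictly less input: only which atoms are $p$-regular, not a Sylow subrack.

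The gap is in the middle step, which is where all the real work sits. You propose to recover the exact order of every atom by showing that every part of $[\phi_a]$ is a cycle part, i.e., by answering Question~$1'$ affirmatively under the cycle form condition; your argument for excluding a part that is a union of $r>1$ equal-length cycles is that otherwise ``one could exhibit two atoms whose lattice-theoretic invariants coincide yet whose true orders differ.'' But the cycle form condition only \emph{asserts the existence} of a lattice automorphism whenever two atoms have identical hypothetical cycle forms; it does not say that such atoms have equal orders, so the configuration you want to exclude produces no contradiction. In effect you are assuming the conclusion of Question~$1'$, which the paper explicitly leaves open. The paper sidesteps this by settling for the weaker invariant that is actually needed: the \emph{set of prime divisors} of the order of each atom, encoded in the function $\vartheta$. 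This is extracted from the minimal associated abelian subgroups $A_{\widetilde{G}}(a)$, whose cardinalities are visible in the lattice and are divisible by the order of $a$; the cycle form condition is used only to justify that an arbitrary consistent assignment of prime sets within such a minimal abelian subgroup is realized by some automorphism of $\mathcal{R}(\widetilde{G})$. Your Frobenius endgame would go through verbatim with this weaker $\vartheta$-type information, so the proposal is repairable, but as written the step on which everything hinges is not proved.
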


\begin{proof}
  Let $A$ and $B$ be normal subgroups of a group $K$. Then $K/(A\cap B)$ and $K/A\times K/B$ are isomorphic groups. As a result one can easily observe that the group $K$ has a normal $p$-complement if and only if the quotient $K/Z_{\infty}(K)$ has a normal complement. Let $\widetilde{G} := G/Z_{\infty}(G)$ and $\widetilde{H} := H/Z_{\infty}(H)$. Since the group $G$ is $p$-nilpotent by assumption, the quotient group $\widetilde{G}$ is $p$-nilpotent as well. Also, by the proof of Theorem~\ref{thm:nil} we know that $\mathcal{R}(\widetilde{G})\cong \mathcal{R}(\widetilde{H})$. Therefore, to prove the Theorem it is enough to show that $p$-nilpotence of $\widetilde{G}$ implies the $p$-nilpotence of $\widetilde{H}$. Towards this end we begin by constructing a function $\vartheta$ from $\widetilde{G}\setminus \{1\}$ to the power set of the set of prime numbers such that for a non-identity element $a$ of $\widetilde{G}$ the image is the set of prime divisors of $|\langle a\rangle|$ up to an automorphism of $\mathcal{R}(\widetilde{G})$ induced from a permutation of $\widetilde{G}$. Here we implicitly identify the set of atoms of $\mathcal{R}(\widetilde{G})$ with the elements of $\widetilde{G}$ so that our construction would work.

  As a first step let us compute the hypothetical cycle forms for each element of $\widetilde{G}$ and also the associated abelian groups. Some of those abelian groups are minimal in the sense that all the non-identity elements in a minimal abelian group have the same cycle form. Notice that we know the order of those abelian groups which means we know exactly for how many times the function $\vartheta$ must take a specific value. Since cycle form condition holds by assumption, we can freely ascribe values of $\vartheta$ for the elements of those minimal abelian groups. Observe that we can continue this process step-by-step till we specify the values of $\vartheta$ for all non-identity elements of $\widetilde{G}$ in a complying fashion.

  Next, we want to determine an element of $\mathcal{R}(\widetilde{G})$ corresponding to a Sylow $p$-subgroup of $\widetilde{G}$. Consider the set
  $$ \widetilde{G}_p := \{a\in \widetilde{G}\colon \vartheta(a) = \{p\}\}. $$
  Up to an automorphism of $\mathcal{R}(\widetilde{G})$ the set $\widetilde{G}_p$ is the set of non-identity $p$-power elements of $\widetilde{G}$. Let $p^k$ be the exact power of $p$ dividing $|\widetilde{G}|$. If $|\widetilde{G}_p| + 1$ equals $p^k$, then $\widetilde{G}_p$ is corresponding to the normal Sylow $p$-subgroup and we are done. Otherwise, we may compute $\mathsf{M}(\mathcal{R}(\widetilde{G}))$ and pick an element of this set such that the number of atoms covered by that element is a multiple $p^k$. Recall that the existence of such an element is guaranteed by Lemma~\ref{lem:3.4}. Iterating this process we would arrive at a subrack of $\widetilde{G}$ corresponding to a Sylow $p$-subgroup $P$ of $\widetilde{G}$.

  Now, we know by \cite[Theorem~5.25]{Isa08} the group $\widetilde{G}$ has a normal $p$-complement if and only if any conjugacy class of $P$ is also a conjugacy class in $\widetilde{G}$. Since we can easily determine the conjugacy classes of $\mathcal{R}(P)$ and since $\mathcal{R}(\widetilde{G})$ and $\mathcal{R}(\widetilde{H})$ are isomorphic lattices, we see that $\widetilde{H}$ must be $p$-nilpotent.
  
\end{proof}


\end{document}